\begin{document}

\renewcommand{\th}{\operatorname{th}\nolimits}
\newcommand{\rej}{\operatorname{rej}\nolimits}
\newcommand{\extto}{\xrightarrow}
\renewcommand{\mod}{\operatorname{mod}\nolimits}
\newcommand{\ul}{\underline}
\newcommand{\Sub}{\operatorname{Sub}\nolimits}
\newcommand{\ind}{\operatorname{ind}\nolimits}
\newcommand{\Fac}{\operatorname{Fac}\nolimits}
\newcommand{\add}{\operatorname{add}\nolimits}
\newcommand{\soc}{\operatorname{soc}\nolimits}
\newcommand{\Hom}{\operatorname{Hom}\nolimits}
\newcommand{\shape}{\mathcal{S}}
\newcommand{\Rad}{\operatorname{Rad}\nolimits}
\newcommand{\RHom}{\operatorname{RHom}\nolimits}
\newcommand{\uHom}{\operatorname{\underline{Hom}}\nolimits}
\newcommand{\End}{\operatorname{End}\nolimits}
\renewcommand{\Im}{\operatorname{Im}\nolimits}
\newcommand{\Ker}{\operatorname{Ker}\nolimits}
\newcommand{\Coker}{\operatorname{Coker}\nolimits}
\newcommand{\Ext}{\operatorname{Ext}\nolimits}
\newcommand{\op}{{\operatorname{op}}}
\newcommand{\Ab}{\operatorname{Ab}\nolimits}
\newcommand{\id}{\operatorname{id}\nolimits}
\newcommand{\pd}{\operatorname{pd}\nolimits}
\newcommand{\A}{\operatorname{\mathcal A}\nolimits}
\newcommand{\C}{\operatorname{\mathcal C}\nolimits}
\newcommand{\D}{\operatorname{\mathcal D}\nolimits}
\newcommand{\B}{\operatorname{\mathcal B}\nolimits}
\newcommand{\R}{\operatorname{\mathcal R}\nolimits}
\newcommand{\M}{\operatorname{\mathcal M}\nolimits}
\newcommand{\X}{\operatorname{\mathcal X}\nolimits}
\newcommand{\Y}{\operatorname{\mathcal Y}\nolimits}
\newcommand{\F}{\operatorname{\mathcal F}\nolimits}
\newcommand{\Z}{\operatorname{\mathbb Z}\nolimits}
\renewcommand{\P}{\operatorname{\mathcal P}\nolimits}
\newcommand{\T}{\operatorname{\mathcal T}\nolimits}
\newcommand{\G}{\Gamma}
\renewcommand{\L}{\Lambda}
\newcommand{\bdot}{\scriptscriptstyle\bullet}
\renewcommand{\r}{\operatorname{\underline{r}}\nolimits}
\newtheorem{theorem}{Theorem}[section]
\newtheorem{prop}[theorem]{Proposition}
\newtheorem{corollary}[theorem]{Corollary}
\newtheorem{lemma}[theorem]{Lemma}

\newcommand{\new}{\textcolor{blue}}
\newcommand{\old}{\textcolor{red}}

\theoremstyle{definition} 
\newtheorem{remark}[theorem]{Remark}
\newtheorem{definition}[theorem]{Definition}
\newtheorem{example}[theorem]{Example}
\newtheorem{proposition}[theorem]{Proposition}

\title[A circular order on edge-coloured trees]{A circular order on edge-coloured trees and RNA $m$-diagrams}

\author[Marsh]{Robert J. Marsh}
\address{School of Mathematics \\
University of Leeds \\
Leeds LS2 9JT \\
United Kingdom}
\email{marsh@maths.leeds.ac.uk}

\author[Schroll]{Sibylle Schroll}
\address{
Department of Mathematics \\
University of Leicester \\
University Road \\
Leicester LE1 7RH \\
United Kingdom}
\email{ss489@le.ac.uk}

\keywords{tree, edge-coloured, labelled triangulation,
Fuss-Catalan number, enumeration, tree of relations,
RNA secondary structure, polygon, $m$-angulation, induction,
interval exchange transformation,  RNA $m$-diagram.}

\begin{abstract}
We study a circular order on labelled, $m$-edge-coloured trees with
$k$ vertices, and show that the set of such trees with a fixed circular order is
in bijection with the set of RNA $m$-diagrams of degree $k$, combinatorial objects
which can be regarded as RNA secondary structures of a certain kind.
We enumerate these sets and show that the set of trees with a fixed circular
order can be characterized as an equivalence class for the transitive closure of
an operation which, in the case $m=3$, arises as an induction in the context
of interval exchange transformations.
\end{abstract}

\thanks{\textit{2010 Mathematics Subject Classification}:
Primary: 05C05, 05A15; Secondary: 37B10. \vskip 0.1cm
This work was supported by the Engineering and Physical Sciences
Research Council [grant number EP/G007497/1], the Mathematical Sciences
Research Institute (MSRI) in Berkeley, California, and by the Leverhulme
Trust in the form of an Early Career Fellowship for Sibylle Schroll.}

\date{22 September 2013}

\maketitle

\section{Introduction}

Interval exchange transformations are, roughly speaking, a generalization of a rotation of a circle. More precisely, given a  partition of the unit interval into smaller segments, an interval
 exchange transformation is a map from the unit interval to itself  where the segments are rearranged according to a choice of permutation.
 Recently there has been particular interest in the class of interval exchange transformations induced by the permutation $(k\ k-1\ \cdots \  1)$. In \cite{cfz11} a combinatorial interpretation
  in terms of an  induction proccess on labelled trees with edges coloured with $3$  possible colours, called trees of relations, has proven a fruitful  tool leading to new results in the
   understanding of the associated languages. In this paper we extend this combinatorics by defining a generalization of the induction process to the case of labelled trees with edges coloured
    with $m$ possible colours and study its properties.

Given positive integers $k$ and $m$, an \emph{$m$-edge-coloured} tree with $k$ vertices is a tree whose edges are
labelled with $m$ possible colours in such a way that no vertex is incident
with two edges of the same colour. It is said to be \emph{labelled} if its
vertices are labelled with $\{1,2,\ldots ,k\}$.
Such trees were shown to possess a circular order in the case $m=3$ in~\cite{cfz11} corresponding to the permuation of the interval exchange transformation.
By giving a new proof of this fact, we show that the circular order can be generalized to
arbitrary $m$ and we study its relation to the generalized induction proccess we define.

Firstly, we show that the set $\T_{k,m}$ of labelled $m$-edge-coloured trees
with $k$ vertices and a fixed circular order is in bijection with a
collection of combinatorial objects consisting of collections of noncrossing
arcs in a disk with coloured marked points on its boundary, satisfying a certain
matching rule. Such diagrams can be regarded as generalized
RNA secondary structures of a certain kind, in the sense of~\cite{waterman78a},
drawn in the Nussinov circle representation~\cite{npgk78}, so we refer to them as
\emph{RNA m-diagrams of degree $k$}.
Such a bijection was given for the case $m=3$ in~\cite{cfz11}.

We compute the cardinality of $\T_{k,m}$ (and thus also the number of RNA $m$-diagrams
of degree $k$) using the well-known correspondence between trees and $m$-angulations of
a polygon which induces a bijection with a collection of appropriately labelled $m$-angulations

We prove the existence of a transformation of labelled $m$-edge-coloured
trees preserving the circular order. We call this transformation
\emph{generalized induction} as it generalizes the induction in the case $m=3$,
studied combinatorially in~\cite{cfz11}.
In~\cite{ferenczizamboni10} it was shown that this induction (for $m=3$)
gives rise to languages naturally generalizing Sturmian languages and
arising from interval exchange transformations.
We also give an example showing that the most straightforward generalization
of this induction to the case $m>3$ does not in general preserve the circular order.

We go on to show that the equivalence classes of the transitive closure of
the generalized induction are characterized by the circular order.
Thus the set of equivalence classes is in bijection with the set of
$k$-cycles in the symmetric group of degree $k$.

Finally, motivated by snake-triangulations and $m$-snakes in cluster theory, we give an interpretation of
the generalized induction process in terms of labelled $m$-angulations of polygons.

This article is structured as follows. In Section 2, we introduce the
combinatorial objects considered in the paper. In Section 3 we give the
bijections mentioned above and compute the cardinality of the set of
labelled $m$-edge-coloured trees with a fixed circular order. In Section
4 we show that the circular order characterizes the equivalence classes
of the transitive closure of the generalized induction and in section 5
we show that induction can be given by a composition of flips of diagonals in an $m$-angulation.


\section{Some combinatorial objects}

We first introduce the main combinatorial objects we will be considering:
edge-coloured trees, RNA $m$-diagrams, and $m$-angulations.

Given positive integers $k,m$, we consider $m$-edge-coloured trees
on $k$ vertices, i.e.\ trees whose edges are coloured
with one of the $m$ symbols $S_1,S_2,\ldots ,S_m$ in such a way that
no two edges incident with the same vertex have the same colour. We say
that such a tree is \emph{labelled} if its vertices are labelled with
$\{1,2,\ldots ,k\}$. As usual, if the connectedness assumption is not
satisfied, we refer to the corresponding objects as forests. See
Figure~\ref{f:bijectionexample1}(b) for an example of a (rooted)
$m$-edge-coloured tree and (c) for an example of a labelled
$m$-edge-coloured tree.

Given an $m$-edge coloured tree $\shape$ with $k$ vertices, each symbol
$S_r$ determines a map (with the same name) from the set of vertices
of $\shape$ to itself. A vertex is fixed by $S_r$ unless it is incident with an
edge coloured $S_r$, in which case it is sent to the vertex at the other
end of that edge. Let $\sigma_{\shape}$ be the composition $S_mS_{m-1}\cdots
S_1$: this is a permutation of the vertices of $\shape$. We use the
same definition for a labelled $m$-edge coloured tree, $G$, obtaining
a permutation $\sigma_G$ in the symmetric group of degree $k$.
We refer to $\sigma_{\shape}$ (respectively, $\sigma_G$) as the
\emph{circular order} of $\shape$ (respectively, $G$). In the $3$-symbol case
these maps were considered in~\cite{cfz11}. The circular order
is always a $k$-cycle:

\begin{lemma} \label{l:alwaysacycle}
Let $G$ be a labelled $m$-edge-coloured tree with $k$ vertices.
Then the circular order $\sigma_G$ of $G$ is a $k$-cycle.
\end{lemma}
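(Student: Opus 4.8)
The plan is to prove this by induction on the number of vertices $k$, using the structural description of the tree $G$ in terms of the edges incident with a chosen vertex, exactly as in the hard direction of Theorem~\ref{t:pseudoknotbijection}. First I would dispose of the base case $k=1$, where $\sigma_G$ is the identity on a one-element set, trivially a $1$-cycle. For the inductive step, let $v$ be a vertex of $G$, say a leaf if convenient, and let the edges incident with $v$ be coloured $S_{r_1},S_{r_2},\ldots ,S_{r_d}$ with $r_1<r_2<\cdots <r_d$, with other endpoints $w_1,\ldots ,w_d$. Deleting $v$ splits $G$ into subtrees $T_1,\ldots ,T_d$ with $w_j\in T_j$; each $T_j$ is a labelled $m$-edge-coloured tree on fewer vertices, so by induction each $\sigma_{T_j}$ is a $(|T_j|)$-cycle.

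The key step is then to track the orbit of $v$ under $\sigma_G=S_mS_{m-1}\cdots S_1$. Applying $\sigma_G$ to $v$, the first symbol among $S_1,\ldots ,S_m$ that moves $v$ is $S_{r_1}$, carrying us into $T_1$ at $w_1$; thereafter, as long as we are inside $T_j$ and have not yet used a symbol exceeding $r_j$, the action of $\sigma_G$ restricted to $T_j$ coincides with $\sigma_{T_j}$ — because the only edges at interior vertices of $T_j$ coloured by symbols outside the "local" range are precisely the edge back toward $v$, which uses colour $S_{r_j}$. So $\sigma_G$ cycles through all of $T_j$ (here invoking the inductive hypothesis that $\sigma_{T_j}$ is a full cycle on $T_j$), and the step that takes us out of $T_j$ must use a symbol $S_r$ with $r>r_j$; the minimal such symbol incident with $v$ is $S_{r_{j+1}}$, so we land in $T_{j+1}$. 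Iterating, $\sigma_G$ visits $T_1$, then $T_2$, \ldots, then $T_d$ completely, and the final step out of $T_d$ returns to $v$. Hence the orbit of $v$ has size $1+\sum_j |T_j|=k$, so $\sigma_G$ is a $k$-cycle.

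The main obstacle is making precise the claim that "$\sigma_G$ restricted to a subtree $T_j$ behaves like $\sigma_{T_j}$ until we exit through colour $S_{r_j}$". This requires a careful bookkeeping argument about which partial composition of the symbols $S_1,\ldots ,S_m$ is in effect when the orbit enters $T_j$ versus when $\sigma_{T_j}$ is applied within $T_j$: entering $T_j$ we have already applied $S_1,\ldots ,S_{r_j}$, so we must resume with $S_{r_j+1},\ldots ,S_m$ and then cycle through $S_1,\ldots ,S_{r_j}$ on the next pass, whereas $\sigma_{T_j}$ always starts from $S_1$. The point is that, from the perspective of any vertex strictly inside $T_j$, no edge is coloured $S_{r_j}$ except possibly the one on the path toward $w_j$, and at $w_j$ itself the edge to $v$ has colour $S_{r_j}$; so the cyclic shift in starting point does not change the induced permutation on the vertices of $T_j$ other than rerouting the step that would have been "closing the cycle at $w_j$" into the exit step through $v$. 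Essentially this is the same combinatorial bookkeeping already carried out in the converse direction of the proof of Theorem~\ref{t:pseudoknotbijection}, and I would either cite that argument or reproduce the relevant paragraph. An alternative, possibly cleaner route is to use Corollary~\ref{c:connectedcase} and Lemma~\ref{l:connectedcase}: any labelled $m$-edge-coloured tree arises (up to relabelling) from a connected noncrossing RNA $m$-diagram, whose associated circular order is visibly the single cycle running once around the circle; but some care is needed because an \emph{arbitrary} labelling need not be the one coming directly from that bijection, so one still has to argue that relabelling conjugates $\sigma_G$ and preserves its cycle type, which brings us back to essentially the same content.
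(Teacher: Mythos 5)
Your overall plan (induction on $k$: remove a vertex $v$, and follow the orbit of $v$ through the subtrees in the order of the colours $S_{r_1}<\cdots <S_{r_d}$ at $v$) is the same as the paper's, but the step you yourself flag as the main obstacle is exactly where your write-up has a gap, and the justification you sketch for it is wrong as stated. It is not true that "from the perspective of any vertex strictly inside $T_j$, no edge is coloured $S_{r_j}$ except possibly the one on the path toward $w_j$": the colouring condition is only local (edges at a single vertex have distinct colours), so $T_j$ may contain many edges coloured $S_{r_j}$ that do not point toward $w_j$ (e.g.\ $e_j$ coloured $S_2$, and inside $T_j$ a path $w_j\,\xrightarrow{S_1}\,c\,\xrightarrow{S_2}\,d$). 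Note also that the orbit does not enter $T_j$ at $w_j$: the entry vertex $\sigma_G(v)$ is obtained by continuing from $w_j$ with $S_{r_j+1},\ldots ,S_m$ and may lie deep inside $T_j$. What actually has to be proved is: (i) for every $u\in T_j$ whose path under one application of $\sigma_G$ does not use the edge $e_j$, one has $\sigma_G(u)=\sigma_{T_j}(u)$; and (ii) for the exit vertex $u$ (the one whose path is at $w_j$ just as $S_{r_j}$ is to be applied), one has $\sigma_{T_j}(u)=\sigma_G(v)$, while $\sigma_G(u)$ lies in the next subtree (or is $v$ when $j=d$). Both are true, but neither is established by your sketch, and citing the converse direction of Theorem~\ref{t:pseudoknotbijection} does not fill the hole: there the cycle condition (II)(b) is a hypothesis, not a conclusion.

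The paper sidesteps this phase-shift bookkeeping by a small but decisive change in how the inductive hypothesis is invoked: it takes $v$ to be a non-leaf and applies the hypothesis not to the bare subtrees $G_i'$ but to $G_i:=G_i'$ with $v$ and the edge $e_i$ reattached (each $G_i$ has fewer than $k$ vertices precisely because $v$ is not a leaf). Since every vertex of $G_i'$ has the same incident edges in $G_i$ as in $G$, one gets immediately that $\sigma_G=\sigma_{G_i}$ on all of $G_i$ except the single vertex $w=\sigma_{G_i}^{-1}(v)$, and that $\sigma_G(w)=\sigma_{G_{i+1}}(v)$; the orbit of $v$ therefore runs through $G_1',G_2',\ldots ,G_d'$ in turn and returns to $v$, with no realignment of the symbol sequence needed. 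If you keep your version (inductive hypothesis on the bare $T_j$, with $v$ possibly a leaf), you must prove (i) and (ii) explicitly. Finally, your proposed alternative via Lemma~\ref{l:connectedcase} and Corollary~\ref{c:connectedcase} is circular: the passage there from rooted trees to labelled trees with circular order $(k\ k-1\ \cdots\ 1)$ assigns the label $k-i$ to the $i$th iterate of $\sigma$ applied to the distinguished vertex, which already presupposes that this orbit exhausts all $k$ vertices, i.e.\ the content of the present lemma.
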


\begin{proof}
This is clearly true if $k=1$ or $2$. Suppose it holds for smaller
values of $k$. Let $v$ be a vertex of $G$ which is not a leaf.
Suppose that $v$ is incident with edges $e_1,e_2,\ldots ,e_d$ in
$G$, coloured with $S_{r_1},S_{r_2},\ldots ,S_{r_d}$
respectively, where $r_1<r_2<\cdots <r_d$. Let the end-points of
these edges (other than $v$) be $v_1,v_2,\ldots ,v_d$. Removing $v$
from $G$ leaves $d$ subtrees $G'_1,G'_2,\ldots ,G'_d$ incident with
$v_1,v_2,\ldots ,v_d$ respectively. Let $G_i$ be the subtree $G'_i$
with $v$ and $e_i$ reattached to $v_i$.

By the inductive hypothesis the $\sigma_{G_i}$ are all cycles. Hence, repeatedly
applying $\sigma_{G_1}$ to $v$ cycles through the vertices of $G_1$. Since
$\sigma_G=\sigma_{G_1}$ on all vertices of $G_1$ except $w=\sigma_{G_1}^{-1}(v)$,
repeatedly applying $\sigma_G$ also cycles through all the vertices of $G_1$.
Since $r_2$ is minimal such that $S_{r_2}$ is a symbol colouring an edge
incident with
$v$ with $r_2>r_1$, $\sigma_G(w)$ will lie in $G'_2$. In fact $\sigma_G(w)=\sigma_{G_2}(v)$,
since in $G_2$, $v$ is not incident with any edge with symbol other than
$S_{r_2}$. Repeatedly applying $\sigma_G$ then cycles through the vertices of $G'_2$ before
coming back to $\sigma_{G'_2}^{-1}(v)$. Repeating this argument, we see that repeatedly
applying $\sigma_G$ to $v$ first cycles through $G'_1$, then through $G'_2,G'_3,\ldots ,G'_d$
in order before eventually returning to $v$.
\end{proof}

Next, we define an \emph{RNA $m$-diagram of degree $k$} as follows.
We take $k$ vertices $1,2,\ldots ,k$, numbered clockwise around a
circle. Each vertex contains the symbols $S_1,S_2,\ldots ,S_m$ written
in clockwise order. A collection of arcs connect equal symbols at
different vertices. A symbol at a vertex can be incident to at most one
arc. Such a diagram is \emph{noncrossing} if it can be drawn in such
a way that there are no crossings between the arcs. We say that it
is \emph{connected} provided there is a path between any two
vertices (where moving between symbols at a vertex is allowed). See
Figure~\ref{f:bijectionexample1}(a) for an example of a connected
noncrossing RNA $4$-diagram of degree $10$.

Diagrams of this kind (but with different rules) have been considered
in~\cite{bischjones97} in the context of Fuss-Catalan algebras.
Such diagrams can be regarded as a certain kind of abstract RNA secondary structures in the sense of~\cite{waterman78a} (see also~\cite[\S5]{hss99}),
drawn in the Nussinov circle representation~\cite{npgk78}.

Finally, let $P$ be an $(m-2)k+2$-sided regular polygon. An
$m$-angulation of $P$ is a collection of diagonals dividing $P$ into
$k$ $m$-gons.
We say that an $m$-angulation of $P$ is \emph{diagonal-coloured}
if each diagonal in $P$ is coloured with a symbol from the set
$\{S_1,S_2,\ldots ,S_m\}$ in such a way that the colours on the sides of
each $m$-sided polygon in the $m$-angulation are $S_1,S_2,\ldots ,S_m$ in
clockwise order. We say that it is \emph{rooted} if there is a
distinguished $m$-sided polygon in the $m$-angulation. We say that it is
\emph{$m$-gon-labelled} if the $m$-gons are labelled $1,2,\ldots ,k$.

\section{Bijections} \label{s:bijections}
Our main aim in this section is to compute the number of
connected noncrossing RNA $m$-diagrams of degree $k$ and to give a
bijective proof that this is the same as the number of labelled
$m$-edge-coloured trees with $k$ vertices and circular order $(k\
k-1\ \cdots \ 1)$.

\begin{theorem} \label{t:pseudoknotbijection}
There is a bijection between the following sets:
\begin{enumerate}
\item[(I)]
The set of degree $k$ noncrossing RNA $m$-diagrams.
\item[(II)]
The set of labelled $m$-edge-coloured forests $G$ with $k$ vertices
such that, on writing $G=\sqcup_i G_i$ as a union of connected components,
we have the following:
\begin{enumerate}
\item[(a)] If $i\not=j$ and $a_1,a_2\in G_i$, $b_1,b_2\in G_j$, we cannot
have that $a_1>b_1>a_2>b_2$.
\item[(b)] If $a\in G_i$ for some $i$, then $\sigma_G(a)$ is the
maximal vertex of $G$ less than $a$ lying in $G_i$ (or, if no such
vertex exists, it is the largest vertex of $G$ lying in $G_i$).
\end{enumerate}
\end{enumerate}
\end{theorem}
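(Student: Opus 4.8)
The plan is to realize the bijection by \emph{contraction}. For a noncrossing RNA $m$-diagram $D$ of degree $k$, let $\Phi(D)$ be the $m$-edge-coloured graph on $\{1,\ldots,k\}$ having an $S_r$-coloured edge $\{i,j\}$ exactly when $D$ has an arc joining the base $S_r$ at vertex $i$ to the base $S_r$ at vertex $j$. Since no base is incident with more than one arc, the edges at a given vertex carry distinct colours, so $\Phi(D)$ really is an $m$-edge-coloured graph; and, ignoring the noncrossing/forest conditions, $\Phi$ is visibly a bijection from \emph{all} RNA $m$-diagrams of degree $k$ onto \emph{all} $m$-edge-coloured graphs on $\{1,\ldots,k\}$, the inverse $\Psi$ being ``re-expand each vertex into its $m$ bases''. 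Thus the theorem reduces to the assertion that $\Phi$ carries the noncrossing diagrams exactly onto the forests satisfying (a) and (b).

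First I would show that if $D$ is noncrossing then $G=\Phi(D)$ is a forest satisfying (a). It is immediate that $G$ is loopless, and two parallel edges would force a crossing, so $G$ is simple. If $G$ had a cycle, then, viewing $G$ as an outerplanar graph on the circle $1,2,\ldots,k$, an innermost cycle would be chordless, so its vertices $u_1,\ldots,u_q$ occur in this circular order; going around this polygon in $D$, the arc realizing edge $\{u_t,u_{t+1}\}$ enters $u_t$ at one slot and leaves at another, these slots are distinct, and ``enclosing the bounded region'' forces a definite inequality between them --- giving a colour sequence $d_1<d_2<\cdots<d_q<d_1$, which is absurd. For (a), suppose $a_1>b_1>a_2>b_2$ with $a_1,a_2$ in one component $G_i$ and $b_1,b_2$ in another $G_j$; a path in $D$ from $a_1$ to $a_2$ (using arcs and free moves among the bases at a common vertex) is an arc in the disc separating $b_1$ from $b_2$, so the corresponding $b_1$--$b_2$ path in $D$ must cross it, contradicting planarity.

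The substantive step is (b). Since each $S_r$, hence $\sigma_G=S_m\cdots S_1$, preserves connected components, it is enough to prove that $\sigma_G^{-1}=S_1\cdots S_m$ sends each $v$ in a component $C$ to the least vertex of $C$ exceeding $v$, or to $\min C$ if there is none; as $\sigma_G^{-1}$ is a bijection preserving $C$, this forces $\sigma_G^{-1}|_C$ to be the cycle induced by the natural order on $C$ (incidentally reproving that $\sigma_G|_C$ is a single cycle, cf.\ Lemma~\ref{l:alwaysacycle}), which is exactly (b). I would do this by induction on $m$: the base case $m=1$ is immediate, since there $G$ is a noncrossing matching. For the step, deleting the $S_1$-arcs from $D$ gives a noncrossing $(m-1)$-diagram whose associated forest $G'$ is $G$ with its $S_1$-edges removed, so by induction $\pi:=S_2\cdots S_m$ induces the natural cyclic order on each component of $G'$. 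The key point, which is where the noncrossing hypothesis is genuinely used, is that it pins down for every $S_1$-edge $\{u,v\}$ (with $u<v$, say) which two $G'$-components it joins and at exactly which of their vertices --- for instance $u=\min$ of its $G'$-component, that component being the ``interval part'' of the circle cut off by the arc. Since $G$ is a forest, the $S_1$-edges induce a forest structure on the $G'$-components, and $\sigma_G^{-1}=S_1\circ\pi$ is obtained from $\pi$ by left-multiplying by the corresponding transpositions one at a time; each such transposition splices two of the cyclic orders into the cyclic order of their union, which is precisely what is needed.

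For the converse, let $G$ be a forest satisfying (a) and (b) and suppose $\Psi(G)$ had a crossing, realized by edges $e_1=\{i_1,j_1\}$ and $e_2=\{i_2,j_2\}$. If $e_1,e_2$ lie in different components one reads off an interleaving of $i_1,i_2,j_1,j_2$ that is forbidden by (a); if they lie in the same component, the structural description of $\sigma_G^{-1}$ obtained above shows that a crossing within a single tree forces $\sigma_G^{-1}$ to disagree with the natural cyclic order on that component, contradicting (b). Since $\Phi$ and $\Psi$ are mutually inverse, the bijection follows. I expect the main obstacle to be the third paragraph, together with the ``same component'' case just mentioned: both amount to proving that the noncrossing condition on $D$ is \emph{equivalent} to $\sigma_G$ inducing the natural cyclic order on each component, and this needs the explicit bookkeeping of where the $S_1$-arcs can sit rather than any formal manipulation; the orientation argument in the forest step (extracting the impossible chain of inequalities) is the other point requiring care.
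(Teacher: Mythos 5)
Your forward direction follows the paper's route: the same arc-to-edge construction, essentially the same no-cycle argument (the paper also extracts an increasing chain of colour indices around the circle and derives a forced crossing), and the same path-crossing argument for (a). Your derivation of (b) is genuinely different: the paper proves it directly by following the path from $a$ to $\sigma_G(a)$ and reading off where it must sit on the circle, whereas you peel off the $S_1$-arcs and splice the increasing cycles of the $G'$-components via the transpositions making up $S_1$. This can be made to work: the noncrossing hypothesis does give, for an $S_1$-arc $\{u,v\}$ with $u<v$, that the component of $u$ in $G$ minus that arc lies in $\{u,u+1,\ldots,v-1\}$ while the component of $v$ avoids that interval, and with these containments each transposition does splice two increasing cycles into the increasing cycle of their union, in any order of processing. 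But as written this splicing is a one-line assertion; it is precisely the bookkeeping that has to be carried out (it is the analogue of the paper's analysis of the segment between $a$ and $\sigma_G(a)$), so you should regard it as a fillable but unwritten step.

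The genuine gap is in the converse, same-component case. You justify it by ``the structural description of $\sigma_G^{-1}$ obtained above,'' but that description was established only under the hypothesis that the diagram is noncrossing; in the converse you are handed a tree satisfying (b) and must show its diagram has no two interleaving arcs, so invoking the forward analysis is circular. What is actually needed is a statement proved independently of the noncrossing hypothesis, e.g.: a labelled $m$-edge-coloured tree whose diagram contains two interleaving arcs cannot have $\sigma_G$ equal to the decreasing cycle $(k\ k-1\ \cdots\ 1)$ on its vertex set (restricted to the component). Nothing in your sketch proves this, and it is the hard half of the theorem. The paper closes it by induction on the number of vertices: in the connected case it deletes the largest vertex $k$, uses (b) to show that the subtrees hanging off $k$ occupy consecutive blocks of labels, with the endpoints $v_1,\ldots,v_d$ of the edges at $k$ in decreasing label order while their colours $S_{r_1},\ldots,S_{r_d}$ increase, so the arcs at $k$ cannot cross each other or the (inductively noncrossing) diagrams of the subtrees. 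You need an argument of this kind --- or an induction on $m$ parallel to your forward step but run from the hypothesis (b) rather than from noncrossing --- before the converse, and hence the bijection, is established.
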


\begin{proof}
Let $\Sigma$ be a noncrossing RNA $m$-diagram of degree $k$ on
$m$ symbols as in (I).

Let $G$ be the labelled $m$-edge-coloured graph with $k$ vertices
whose edges given by the arcs of $\Sigma$.
That is, there is an edge between vertices $i$ and $j$ of $G$
coloured with $S_k$ if and only if there is an arc in $\Sigma$ between
the instances of the symbol $S_k$ in vertices $i$ and $j$ in $\Sigma$.

\textbf{Claim:} $G$ is a forest.

We prove the claim. Suppose, for a contradiction, there is a cycle
$$\xymatrix{
a_1 \ar@{-}[r]^{S_{i_1}} & a_2 \ar@{-}[r]^{S_{i_2}} & \cdots
& a_{p-1} \ar@{-}[r]^{S_{i_{p-1}}} & a_p \ar@{-}[r]^{S_{i_p}} & a_1 }$$
in $G$, and thus a corresponding cycle in $\Sigma$. Without loss of generality, we may assume that $i_1<i_2$.
For vertices $a,b$, we denote by $(a,b)$ the
set of vertices $c$ of $\Sigma$ lying strictly clockwise of $a$ and strictly
anticlockwise of $b$.

Then $a_3\in (a_2,a_1)$, since the arc in $\Sigma$ corresponding to the
edge in $G$ between $a_2$ and $a_3$ cannot cross the arc in $\Sigma$
corresponding to the edge in $G$ between $a_1$ and $a_2$.

By assumption on $\Sigma$, $i_2\not=i_3$. If $i_2>i_3$, there can be
no path in $\Sigma$ from the symbol $S_{i_3}$ in vertex $a_3$ of
$\Sigma$ back to vertex $a_1$ of $\Sigma$
without crossings, a contradiction, hence $i_2<i_3$.

Repeating this argument, we see that, moving clockwise on $\Sigma$
from $a_1$ we meet vertices $a_2,a_3,\ldots ,a_p$ in order before
returning to $a_1$, and that $i_1<i_2<\cdots <i_p$. But then the
arc between $a_p$ and $a_1$ (on symbol $S_{i_p}$ crosses the arc
between $a_1$ and $a_2$ (on symbol $S_{i_1}$), since $i_1<i_p$;
see Figure~\ref{f:cycleargument}. Hence $G$ has no cycles,
and must be a tree. The claim is shown.

\begin{figure}
\begin{center}
\psfragscanon
\psfrag{a1}{$\mathbf{a_1}$}
\psfrag{a2}{$\mathbf{a_2}$}
\psfrag{a3}{$\mathbf{a_3}$}
\psfrag{ap}{$\mathbf{a_p}$}
\psfrag{S1}{$S_1$}
\psfrag{Sm}{$S_m$}
\psfrag{Si1}{$S_{i_1}$}
\psfrag{Si2}{$S_{i_2}$}
\psfrag{Si3}{$S_{i_3}$}
\psfrag{Sip}{$S_{i_p}$}
\psfrag{Sip1}{$S_{i_{p-1}}$}
\includegraphics[width=4.5cm]{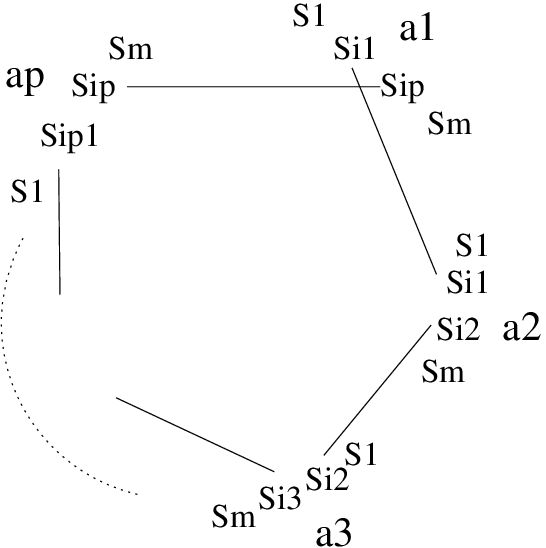} 
\end{center}
\caption{A cycle in $G$ leads to a crossing}
\label{f:cycleargument}
\end{figure}

We next prove that (a) holds. Suppose that $i\not=j$, $a_1,a_2\in G_i$,
$b_1,b_2\in G_j$, and $a_1>b_1>a_2>b_2$. Then $a_1,b_1,a_2,b_2$
follow each other anticlockwise around the circle. Then, since $a_1,a_2$
are in the same connected component of $G$, there is a path in $\Sigma$
from $a_1$ to $a_2$, and similarly from $b_1$ to $b_2$. The arrangement of
$a_1,a_2,b_1$ and $b_2$ implies that these two paths cross, a
contradiction. Hence no such arrangement can occur, and (a) is shown.

We next prove that (b) holds. It is enough to prove the following
claim:

\textbf{Claim:} Let $a\in G_i$. Then $\sigma_G(a)$ is the next vertex
of $G_i$ (considered as a vertex of $\Sigma$) anticlockwise on the circle
from $a$.

To prove the claim, we note that, by the definition of $\sigma_G$,
$\sigma_G(a)$ is connected to $a$ by a path in $G$:
$$\xymatrix{
a=a_1 \ar@{-}[r]^{S_{i_1}} & a_2 \ar@{-}[r]^{S_{i_2}} & \cdots & \cdots \ar@{-}[r]^{S_{i_{p-1}}} & a_{p}=\sigma_G(a)},$$
where $i_1<i_2<\cdots <i_{p-1}$. Furthermore, $a_1$ is not incident with
any arc with symbol $S_r$ for $r<i_1$, $a_p$ is not incident with any
arc with symbol $S_r$ for $r>i_{p-1}$, and, for $2\leq j\leq p-1$, $a_j$ is
not incident with any symbol $S_r$ for $i_{j-1}<r<i_j$.

The existence of the above path implies that $\sigma_G(a)$ lies in
$G_i$. Since $i_1<i_2<\cdots <i_p$ and there are no crossings, the path
must go clockwise around the circle; see Figure~\ref{f:asigmaa}.
The conditions above and the fact there are no crossings imply that no
vertex in $(\sigma_G(a),a)$ has an arc with a vertex in $(a,\sigma_G(a))$
or with $a$ or $\sigma_G(a)$, so these vertices are connected only
amongst themselves. It follows that they do not lie in $G_i$ and we
see that (II)(b) holds. Thus $G$ is a labelled $m$-edge-coloured forest satisfying
(II).
\begin{figure}
\begin{center}
\psfragscanon
\psfrag{a=a1}{$\mathbf{a}=\mathbf{a_1}$}
\psfrag{a2}{$\mathbf{a_2}$}
\psfrag{a3}{$\mathbf{a_3}$}
\psfrag{ap}{$\mathbf{a_p}$}
\psfrag{S1}{$S_1$}
\psfrag{Sm}{$S_m$}
\psfrag{Si1}{$S_{i_1}$}
\psfrag{Si2}{$S_{i_2}$}
\psfrag{Si3}{$S_{i_3}$}
\psfrag{Sip}{$S_{i_p}$}
\psfrag{Sip1}{$S_{i_{p-1}}$}
\psfrag{sa=ap}{$\sigma(\mathbf{a})=\mathbf{a_p}$}
\includegraphics[width=6cm]{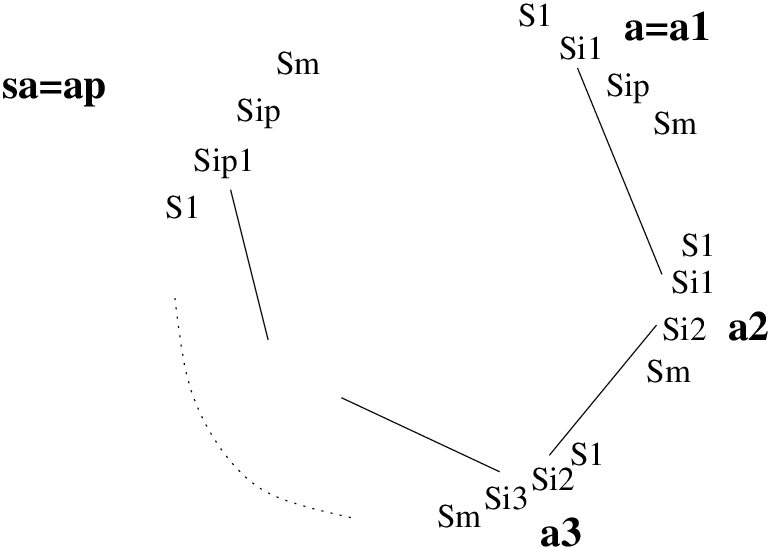} 
\end{center}
\caption{The part of $\Sigma$ between $a$ and $\sigma_G(a)$.}
\label{f:asigmaa}
\end{figure}

Conversely, suppose that we have a labelled $m$-edge-coloured forest
with $k$ vertices satisfying (II).
Let $\Sigma$ be the RNA $m$-diagram of degree $k$
with an arc joining $S_r$ in vertex $i$ with $S_r$ in vertex $j$ if and
only if there is an edge in $G$ between vertices $i$ and $j$ coloured
with symbol $S_r$. We must check that $\Sigma$ can be drawn with no
crossing arcs, i.e.\ that it is noncrossing.

We do this by induction on the number of vertices. Suppose first that
$G$ has more than one connected component, i.e.\ that $G$ is not connected.
By induction, each component $G_i$ corresponds to a noncrossing RNA
$m$-diagram (on the vertices of $G_i$).

Suppose that we had $a_1>a_2\in G_i$ and
$b_1>b_2\in G_j$ for two distinct components $G_i$ and $G_j$, with
arcs between $a_1$ and $a_2$ and $b_1$ and $b_2$ which cross
in $\Sigma$. Then, going around the circle anticlockwise, starting
at vertex $k$, we must encounter
$a_1,b_1,a_2,b_2$ in order, or $b_1,a_1,b_2,a_2$ in order. Swapping $G_i$ and
$G_j$ if necessary, we can assume we are in the first case. But then
$a_1>b_1>a_2>b_2$, contradicting (II)(a). Hence $\Sigma$ is noncrossing.

So we are reduced to the case in which $G$ has exactly one connected
component, i.e.\ $G$ is connected. Suppose that vertex $k$ is incident with
edges $e_1,e_2,\ldots ,e_d$ in $G$, coloured with symbols
$S_{r_1},S_{r_2},\ldots ,S_{r_d}$ where
$r_1<r_2<\ldots <r_d$. Let the endpoints of these edges (other than $k$)
be $v_1,v_2,\ldots ,v_d$. Removing vertex $k$ from $G$ leaves precisely
$d$ trees $T_1,T_2,\ldots ,T_d$ containing vertices $v_1,v_2,\ldots ,v_d$
respectively. By (b), we know that $\sigma_G=S_mS_{m-1}\cdots S_1$
induces the permutation $(k\ k-1\ \cdots 1)$ on the vertices of $G$.

We apply $\sigma_G$ to vertex $k$, and then repeatedly apply $\sigma_G$.
By its definition, each application of $\sigma_G$ corresponds to following a
certain path through $G$, i.e.\ passing along the edges corresponding to the
symbols in the sequence $S_1,S_2,\ldots ,S_m$ in that order,
when such incident edges exist.
Since the edge $e_1$ has symbol $S_{r_1}$, and no edge incident with $k$
has smaller symbol, it follows that, after the first application of
$\sigma_G$, we obtain vertex $k_1:=k-1$ in $T_1$.

Since $\sigma_G$ is a $k$-cycle, after repeated application of $\sigma_G$,
we must leave $T_1$. Suppose that $k_2$ is the number of the first vertex
reached outside $T_1$. Since $r_2$ is the minimum number of a symbol
adjacent to $k$ greater than $r_1$, $k_2$ will lie in $T_2$.
Repeating this argument,
we will obtain $k>k_1>k_2>\cdots >k_d\geq 1$ such that vertices
$k_{i+1}+1,\ldots ,k_i$ lie in tree $T_i$ for $i=1,2,\ldots ,d-1$.
At the final step, the first vertex reached on leaving $T_d$ must be
$k$. Since $\sigma_G$ is a $k$-cycle, all vertices must have been visited.

Let $k_{d+1}=0$. It follows from the above that tree $T_i$ contains precisely
vertices $k_{i+1}+1,\ldots ,k_i$ for each $i$.
Thus, the numbering of the vertices of $G$ is first the vertices
of $T_d$ in some order, then the vertices of $T_{d-1}$ in some order,
then the vertices of $T_{d-2}$, and so on, ending with the vertices of $T_1$
and then finally $k$. Each $T_i$ will
correspond (by the inductive hypothesis) to a noncrossing RNA $m$-diagram
on its vertices. Thus the vertices $v_1,v_2,\ldots ,v_d$
in $G$ will be numbered in decreasing order. The arcs in $\Sigma$
from $k$ to these vertices are numbered by symbols
$S_{r_1},S_{r_2},\ldots ,S_{r_d},$
respectively, with $r_1<r_2<\cdots <r_d$. It follows these arcs
do not cross each other or any of the other arcs in $\Sigma$.
See Figure~\ref{f:bijectionexample1}(a) for an example,
where $v_1=9$, $r_1=1$, and $v_2=8$, $r_2=4$.
Hence, $\Sigma$ is noncrossing and thus an object in (I) as required.

It is clear that the two maps we have constructed are inverse to each
other, so the theorem is proved.
\end{proof}

The following lemma follows easily from the definitions.

\begin{lemma}
A noncrossing RNA $m$-diagram is connected
if and only if the corresponding labelled $m$-edge-coloured forest is connected,
i.e.\ is a tree.
\end{lemma}

\begin{remark} \label{r:sigmaisminus1}
\begin{enumerate}
\item[(1)] Since a forest on $k$ vertices is connected if and only if it has
exactly $k-1$ edges, a noncrossing RNA $m$-diagram with $k$ vertices is
connected if and only if it has $k-1$ arcs.

\item[(2)] In the connected case, the circular order of $G$
is just the permutation $(k\ k-1\ \cdots \ 1)$ and we have
a bijection between the following sets:
\begin{center}
The set of connected noncrossing RNA $m$-diagrams of degree $k$.

$\updownarrow$

The set of labelled $m$-edge-coloured trees with $k$ vertices and circular order
$(k\ k-1\ \cdots \ 1)$.
\end{center}
\item[(3)] The labelling on the vertices for a tree in the latter set in (2)
is determined by a distinguished vertex, that labelled $k$, say, since
$\sigma_G$ then determines the labels on all the other vertices.
\end{enumerate}
\end{remark}

By Remark~\ref{r:sigmaisminus1} and Lemma~\ref{l:alwaysacycle} we have:

\begin{corollary} \label{c:connectedcase}
The bijection in Theorem~\ref{t:pseudoknotbijection} induces a bijection
between the following sets:
\begin{enumerate}
\item[(a)] The set of connected noncrossing RNA $m$-diagrams of degree $k$.
\item[(b)] The set of rooted $m$-edge-coloured trees with $k$ vertices.
\item[(c)] The set of labelled $m$-edge-coloured trees with $k$ vertices
and circular order $(k\ k-1\ \cdots\ 1)$.
\end{enumerate}
\end{corollary}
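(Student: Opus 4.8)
The plan is to deduce all three bijections from work already done, using Theorem~\ref{t:pseudoknotbijection}, Lemma~\ref{l:connectedcase}, and Remark~\ref{r:sigmaisminus1} as black boxes, and then dealing separately with the passage from labelled trees to rooted (unlabelled) trees. First I would observe that restricting the bijection of Theorem~\ref{t:pseudoknotbijection} to the connected objects on each side gives the bijection between (a) and (c): by Lemma~\ref{l:connectedcase} a noncrossing RNA $m$-diagram is connected precisely when the corresponding labelled $m$-edge-coloured forest $G$ is a tree, and by Remark~\ref{r:sigmaisminus1}(2) in that case conditions (II)(a),(b) reduce exactly to the statement that $\sigma_G=(k\ k-1\ \cdots\ 1)$. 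So the sets in (a) and (c) are identified, with the bijection being the restriction of the one in Theorem~\ref{t:pseudoknotbijection}.

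Next I would set up the bijection between (b) and (c). Given a labelled $m$-edge-coloured tree $G$ with circular order $(k\ k-1\ \cdots\ 1)$, forget the labels on all vertices except that labelled $k$, and declare the vertex that was labelled $k$ to be the root; this produces a rooted $m$-edge-coloured tree on $k$ vertices. Conversely, given a rooted $m$-edge-coloured tree $\shape$ with root $v$, I must produce a labelling with circular order $(k\ k-1\ \cdots\ 1)$. The recipe is forced: label the root $k$, and then use the permutation $\sigma_{\shape}=S_mS_{m-1}\cdots S_1$ to propagate labels, i.e.\ label $\sigma_{\shape}(v)$ with $k-1$, label $\sigma_{\shape}^2(v)$ with $k-2$, and so on. For this to be well defined and to exhaust $\{1,\ldots,k\}$ I need $\sigma_{\shape}$ to be a $k$-cycle; this is exactly Lemma~\ref{l:alwaysacycle}, which is available (it is stated in the excerpt, and in any case the connected case also follows from Remark~\ref{r:sigmaisminus1}(2) via the already-established (a)$\leftrightarrow$(c) bijection). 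Under this labelling the circular order is visibly $(k\ k-1\ \cdots\ 1)$ by construction, so the image lies in (c). The two maps between (b) and (c) are mutually inverse: starting from a labelled tree in (c), forgetting labels and re-propagating from the $k$-labelled vertex recovers the original labels since they already satisfied $j\mapsto j-1$ under $\sigma_G$; starting from a rooted tree, labelling and then forgetting all but $k$ returns the same rooted tree.

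Composing, the bijection between (a) and (b) is: send a connected noncrossing RNA $m$-diagram to the tree of Theorem~\ref{t:pseudoknotbijection}, then forget all vertex labels but retain the vertex labelled $k$ as the root. I expect the main obstacle to be purely expository rather than mathematical: one must be careful that "forgetting labels except the root" and "propagating labels by $\sigma_{\shape}$ starting from the root" are genuinely inverse, which hinges on the fact (from Remark~\ref{r:sigmaisminus1}(2)) that membership in set (c) forces the labelling to be $\sigma_G=(k\ k-1\ \cdots\ 1)$, so there is no choice of labelling once the root is fixed. Everything else is a routine check that the relevant diagrams commute.
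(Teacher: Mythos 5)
Your argument is correct and is essentially the paper's own: the (a)$\leftrightarrow$(c) bijection is the restriction of Theorem~\ref{t:pseudoknotbijection} via Lemma~\ref{l:connectedcase} and Remark~\ref{r:sigmaisminus1}(2), and the (b)$\leftrightarrow$(c) bijection is exactly the paper's observation that the labelling is determined by the vertex labelled $k$ together with $\sigma_G$ (with the cycle property supplied by Lemma~\ref{l:alwaysacycle}). No substantive difference from the paper's proof.
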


See Figure~\ref{f:bijectionexample1}(a)-(c) for an illustration of this
bijection. The following is well-known; see, for example,~\cite[Sect.\ 6.2]{stanley99}
(adding the labelling is straightforward).

\begin{figure}
\centering
\psfragscanon
\psfrag{S1}{$S_1$}
\psfrag{S2}{$S_2$}
\psfrag{S3}{$S_3$}
\psfrag{S4}{$S_4$}
\subfigure[A connected noncrossing RNA $4$-diagram of degree $10$.]{\includegraphics[scale=0.5]{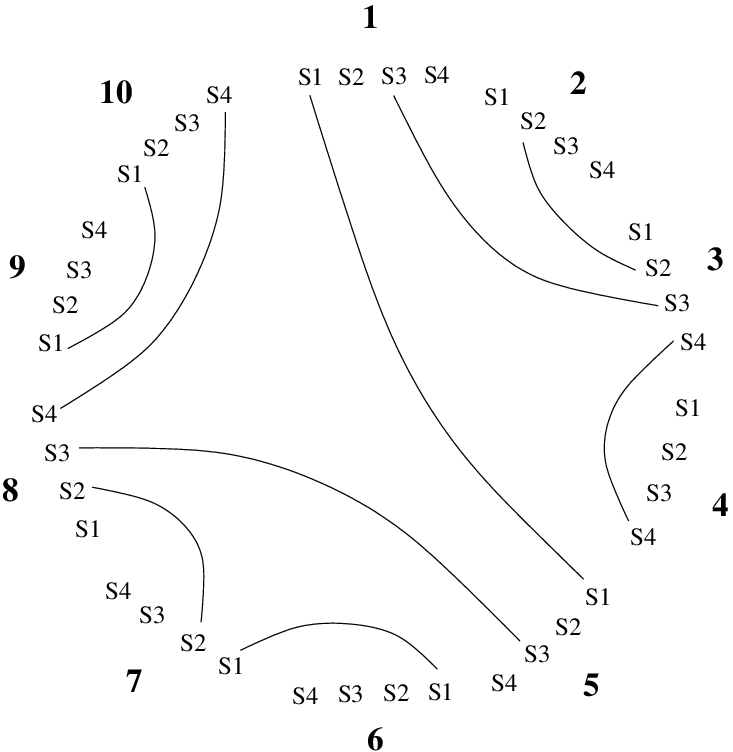}} \\
\subfigure[A rooted $4$-edge-coloured tree with $10$ vertices.]{\includegraphics[scale=0.4]{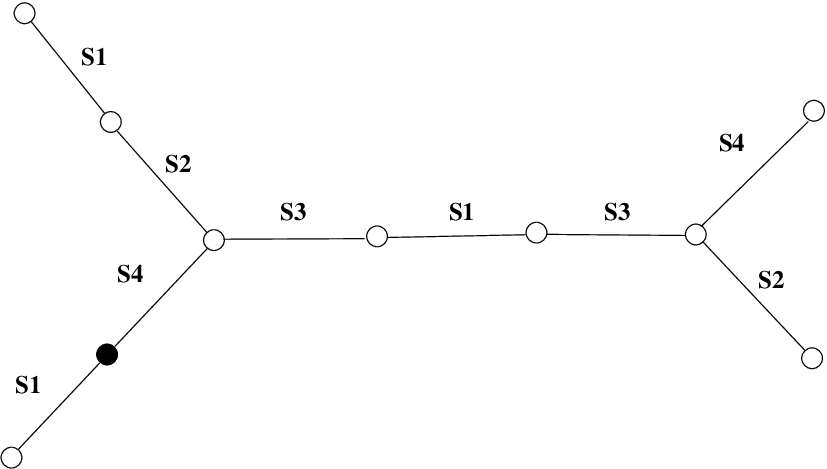}} \qquad
\subfigure[A labelled $4$-edge-coloured tree with $10$ vertices and circular
order $(10\ 9\ \cdots 1)$.]{\includegraphics[scale=0.4]{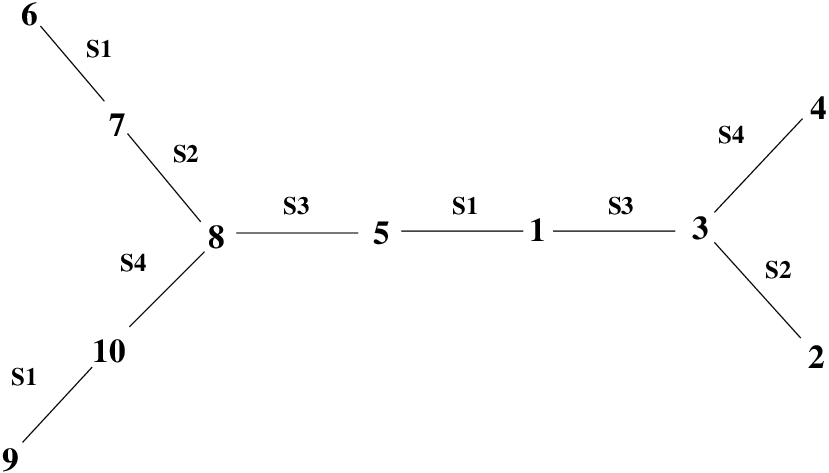}}
\caption{Objects corresponding to each other under the bijections in Corollary~\ref{c:connectedcase}.}
\label{f:bijectionexample1}
\end{figure}

\begin{theorem} \label{t:bijectionshapesdissections}
There is a bijection between the following sets:
\begin{enumerate}
\item[(a)] The set of $m$-edge-coloured trees with $k$ vertices.
\item[(b)] The set of diagonal-coloured $m$-angulations of
an $(m-2)k+2$-sided regular polygon up to rotation.
\end{enumerate}
\end{theorem}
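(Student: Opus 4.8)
The plan is to realise the bijection through the \emph{dual tree} of an $m$-angulation, giving explicit maps in both directions and then checking they are mutually inverse. This runs parallel to the classical uncoloured bijection between $m$-angulations up to rotation and $(m-1)$-ary plane trees, with the diagonal-colouring encoding the additional data needed to rigidify the correspondence.

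In the direction (b)$\to$(a): given a diagonal-coloured $m$-angulation $A$ of the $(m-2)k+2$-gon $P$, let $T_A$ be the graph whose vertices are the $k$ constituent $m$-gons of $A$, with one edge joining two vertices for each diagonal shared by the corresponding $m$-gons, that edge being coloured by the colour of the diagonal. An Euler-characteristic count (no new vertices are created in the dissection) shows $A$ has exactly $k-1$ diagonals, and connectedness of $P$ shows $T_A$ is connected; hence $T_A$ is a tree on $k$ vertices. The edges of $T_A$ at a given vertex correspond to some of the sides of the associated $m$-gon, and those sides carry the distinct colours $S_1,\ldots,S_m$, so $T_A$ is $m$-edge-coloured. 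A rotation of $P$ carries $A$ to an angulation with the same dual tree, so $A\mapsto T_A$ descends to a well-defined map from (b) to (a).

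In the direction (a)$\to$(b): given an $m$-edge-coloured tree $T$ with $k$ vertices, I would fatten $T$ to an oriented ribbon graph, using the colours to declare that at each vertex the clockwise cyclic order of the $m$ potential edge-slots is $S_1,S_2,\ldots,S_m$, with the edges of $T$ occupying some of these slots and the rest left empty. Since $T$ is a tree the associated surface is a disc; replacing each fattened vertex by an $m$-gon with sides coloured $S_1,\ldots,S_m$ clockwise, and each edge of $T$ of colour $S_r$ by the identification of the two $S_r$-sides of the adjacent $m$-gons, one obtains an $m$-angulation of the boundary polygon, which has $mk-2(k-1)=(m-2)k+2$ sides, whose diagonals are the edges of $T$ with their colours, and in which the clockwise colour condition holds by construction. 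Identifying this abstract boundary polygon with $P$ requires only a choice of initial vertex, and any two such choices differ by a rotation, so we obtain a well-defined element of (b). (Equivalently one argues by induction on $k$: pick a leaf $v$ of $T$ with incident edge of colour $S_r$ to a vertex $u$; by induction $T\setminus\{v\}$ gives an $m$-angulation of the $(m-2)(k-1)+2$-gon in which the $S_r$-side of the $m$-gon at $u$ is necessarily a boundary edge, as $u$ has no other $S_r$-edge; glue a fresh $m$-gon onto that side, creating a diagonal of colour $S_r$ and enlarging the polygon by $m-2$ sides.)

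It then remains to check the two constructions are inverse. Taking the dual of the angulation built by fattening $T$ returns $T$, since the $m$-gons are exactly the vertices and the glued pairs of sides are exactly the edges, with matching colours. Conversely, the dual of a diagonal-coloured $m$-angulation $A$ records, for each $m$-gon, which of its colour-labelled sides are diagonals and how the $m$-gons are glued along them; fattening this data reconstructs $A$ up to the choice of initial boundary vertex, i.e.\ up to rotation. Hence the two maps are mutually inverse bijections. I expect the main obstacle to be the fattening step: one must verify cleanly that the ribbon surface of a tree is a disc (so that its boundary is a single $(m-2)k+2$-gon) and that the clockwise colourings glue globally consistently, and then track the rotation ambiguity carefully so that both maps are genuinely well-defined on the stated sets. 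The inductive variant sidesteps some of this, but then one must check that the gluing site is canonically pinned down by the colour and that the output is independent of the chosen leaf.
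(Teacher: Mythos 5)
Your proposal is correct, and the direction from angulations to trees is exactly the paper's argument: take the dual graph, colour its edges by the shared diagonals, and note that connectedness plus the absence of internal vertices (equivalently, your count of $k-1$ diagonals) forces it to be an $m$-edge-coloured tree on $k$ vertices. Where you genuinely diverge is in the direction from trees to angulations. The paper first \emph{completes} the tree (adding, at each vertex, edges in the missing colours ending in boundary leaves -- your ``empty slots''), then maps the completed tree rigidly into the plane with unit-length edges and the colours $S_1,\ldots,S_m$ clockwise at angle $2\pi/m$ around each vertex; each interior vertex then spans a regular $m$-gon, and the resulting (possibly overlapping) collection of $m$-gons is an $m$-cluster in the sense of Harary--Palmer--Read, so that an orientation-preserving homeomorphism from their Section~7 turns it into an honest diagonal-coloured $m$-angulation of $P$, the rotation ambiguity coming from the rigidity of the embedding. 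You instead fatten the tree to a ribbon graph (or induct on leaves), glue abstract colour-matched $m$-gons, and argue directly that a tree's ribbon surface is a disc with $mk-2(k-1)=(m-2)k+2$ boundary sides, handling the rotation ambiguity through the choice of initial boundary vertex. What the paper's route buys is that the delicate topological step (untangling overlaps into a genuine dissection of a convex polygon) is outsourced to a citation; what your route buys is self-containedness, at the cost that the two facts you flag -- that the ribbon surface of a tree is a disc with a single boundary circle, and (in the inductive variant) independence of the chosen leaf -- must actually be written out, since they are precisely what the appeal to Harary--Palmer--Read replaces. With those details supplied, your argument is a complete and somewhat more elementary proof of the same bijection.
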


\begin{corollary} \label{c:dissectionbijection}
There is a bijection between the following sets:
\begin{enumerate}
\item[(a)] The set of labelled $m$-edge-coloured trees
with $k$ vertices and circular order $(k\ k-1\ \cdots\ 1)$.
\item[(b)] The set of rooted diagonal-coloured $m$-angulations
of an $(m-2)k+2$-sided regular polygon up to rotation.
\end{enumerate}
\end{corollary}

\begin{proof}
This follows from Theorem~\ref{t:bijectionshapesdissections} and
Corollary~\ref{c:connectedcase}.
\end{proof}

\begin{corollary} \label{c:tkformula}
The cardinality of each of the following sets:
\begin{enumerate}
\item[(a)]
The set of labelled $m$-edge-coloured trees with $k$ vertices and
circular order $(k\ k-1\ \cdots\ 1)$;
\item[(b)]
The set of connected noncrossing RNA $m$-diagrams of degree $k$;
\end{enumerate}
is given by $$T_{k,m}=\frac{m}{(m-2)k+2} \binom{(m-1)k}{k-1}.$$
\end{corollary}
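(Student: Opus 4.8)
The plan is to use the bijections already established to turn the problem into a count of polygon dissections, and then to read that count off from the classical Fuss--Catalan enumeration. By Corollary~\ref{c:connectedcase} the two sets in the statement have the same cardinality, and by Corollary~\ref{c:dissectionbijection} this common cardinality equals the number $|B|$ of rooted diagonal-coloured $m$-angulations of a regular $N$-gon up to rotation, where $N=(m-2)k+2$. So it suffices to prove $|B|=T_{k,m}$.

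First I would count a rigidified version. Let $A$ be the set of triples consisting of (i) an $m$-angulation of the regular $N$-gon, now with its boundary vertices labelled $1,\dots,N$, (ii) a diagonal-colouring of it (so the $m$ sides of every $m$-gon of the dissection carry the distinct colours $S_1,\dots,S_m$ in clockwise order), and (iii) a choice of root $m$-gon. I claim $|A|=mkF$, where $F=\tfrac1k\binom{(m-1)k}{k-1}$. Indeed, $F$ is precisely the number of ways to dissect a convex $N$-gon with labelled vertices into $m$-gons --- the Fuss--Catalan count of $(m-2)$-divisible dissections; see~\cite{tzanaki06} and~\cite[Sect.\ 6.2]{stanley99}, and note $\tfrac1k\binom{(m-1)k}{k-1}=\tfrac{1}{(m-2)k+1}\binom{(m-1)k}{k}$. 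There are $k$ choices of root $m$-gon. And each $m$-angulation admits exactly $m$ diagonal-colourings: choose one $m$-gon of the dissection and colour its sides $S_1,\dots,S_m$ clockwise, which can be done in $m$ ways (the cyclic rotations); for any adjacent $m$-gon sharing a diagonal $e$, the colour of $e$ is then fixed, and since the clockwise order of the sides of that $m$-gon is fixed, the colours of all its remaining sides are forced; propagating along the dual graph --- which is a connected tree by the proof of Theorem~\ref{t:bijectionshapesdissections}, hence has no cycles to create conflicts --- colours everything uniquely. So each of the $m$ initial choices extends in exactly one way, and distinct choices give distinct colourings. Hence $|A|=F\cdot k\cdot m$.

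Second, the cyclic group $\Z/N$ of rotations of the $N$-gon acts on $A$ (by rotating the labels), and $B=A/(\Z/N)$. I would show this action is free. Suppose a nontrivial rotation $\rho$ fixes an element of $A$. Then $\rho$ preserves the root $m$-gon $Q$ as a region, so $\rho$ restricts to a rotation of $Q$; this restriction is not the identity, since a rotation of the $N$-gon fixing the $\geq 3$ vertices of $Q$ is trivial, so it cyclically permutes the sides of $Q$ nontrivially. But $\rho$ preserves the colouring, whereas the sides of $Q$ carry the $m$ \emph{distinct} symbols $S_1,\dots,S_m$, and no nontrivial cyclic shift of a sequence of distinct symbols fixes it --- a contradiction. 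Thus the action is free, so $|B|=|A|/N=mkF/N=\dfrac{m}{(m-2)k+2}\binom{(m-1)k}{k-1}=T_{k,m}$, which is the desired formula.

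The step I expect to be the main obstacle is making the enumeration of diagonal-colourings watertight. One must be careful with $m$-gons of the dissection that have several sides on the boundary of the $N$-gon (so that, in the paper's definition, the colouring data records fewer than $m-1$ symbols on such a polygon), checking that the propagation argument still pins the colouring down uniquely; and the degenerate case $k=1$, where the single $m$-gon has no diagonals at all, is best disposed of by hand ($T_{1,m}=1$, and there is a unique connected noncrossing RNA $m$-diagram of degree $1$, namely the one with no arcs). A secondary point is citing the precise form of the classical Fuss--Catalan count $F$ for polygons with \emph{labelled} vertices, rather than the frequently quoted ``up to rotation'' or ``rooted at an edge'' variants.
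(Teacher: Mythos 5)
Your proposal is correct and follows essentially the same route as the paper's own proof: reduce via Corollaries~\ref{c:connectedcase} and~\ref{c:dissectionbijection} to counting rooted diagonal-coloured $m$-angulations up to rotation, multiply the Fuss--Catalan count of $m$-angulations by $k$ root choices and $m$ colourings, and divide by the $(m-2)k+2$ rotations. The only difference is that you spell out why every rotation orbit has full size $(m-2)k+2$ (the free-action argument via the distinctly coloured sides of the root $m$-gon), a point the paper asserts without detail.
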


\begin{proof}
Both sets have the same cardinality by Corollary~\ref{c:connectedcase}
and, by Corollary~\ref{c:dissectionbijection}, they have the same
cardinality as the set of rooted diagonal-coloured $m$-angulations
of an $(m-2)k+2$-sided regular polygon up to rotation.
The number $S_{k,m}$ of such $m$-angulations without a root, with no
labelling of diagonals and ignoring rotational equivalence is well-known (see
e.g.~\cite{hiltonpedersen91}).
Let $C_k^m$ be the $k$th Fuss-Catalan number of degree $m$:
$$C_k^m=\frac{1}{k}\binom{mk}{k-1}=\frac{1}{(m-1)k+1}\binom{mk}{k}.$$
Then
$$S_{k,m}=C_k^{m-1}=\frac{1}{(m-2)k+1} \binom{(m-1)k}{k}.$$

Since there are $k$ $m$-sided polygons in an $m$-angulation, there are $k$
possibilities for the root. There are $m$ possibilities for a labelling
since once one diagonal is coloured, all other diagonals in the $m$-angulation
have determined colours using the rule that each $m$-gon must have its
edges coloured $S_1,S_2,\ldots ,S_m$ clockwise around the boundary.
Each orbit of diagonal-coloured rooted $m$-angulations under the action of
the rotation group of the polygon contains $(m-2)k+2$ elements
(the number of sides of $P$).
Hence, we have:
$$T_{k,m}=\frac{kmS_{k,m}}{(m-2)k+2}$$
and the result follows.
\end{proof}

It is interesting to note that by~\cite[\S3]{hoggattbicknell76}, the sequence
$T_{1,m}$, $T_{2,m},\ldots $ is the $m$-fold convolution of the sequence
$S_{0,m}$, $S_{1,m},\ldots $.

\begin{example}
For $m=3,4,5,6$, the first few values of $T_{k,m}$ are given in the
following table:
\begin{center}
\begin{tabular}{|c|c|c|c|c|c|c|c|}
\hline
$k$ & $0$ & $1$ & $2$ & $3$ & $4$ & $5$ & $6$ \\
\hline
$T_{k,3}$ & $1$ & $1$ & $3$ & $9$ & $28$ & $90$ & $297$ \\
\hline
$T_{k,4}$ & $1$ & $1$ & $4$ & $18$ & $88$ & $455$ & $2448$ \\
\hline
$T_{k,5}$ & $1$ & $1$ & $5$ & $30$ & $200$ & $1425$ & $10626$ \\
\hline
$T_{k,6}$ & $1$ & $1$ & $6$ & $45$ & $380$ & $3450$ & $32886$ \\
\hline
\end{tabular}
\end{center}
\vskip 0.3cm
The cases $m=3,4$ are sequences A071724 and A006229, respectively,
in~\cite{sloane10}; the cases $m=5,6$ do not appear. The case $m=3$
appears in~\cite[Prop.\ 7.5]{cfz11}.
\end{example}

\begin{corollary}
The total number of labelled $m$-edge-coloured trees with $k$ vertices
is:
$$U_{k,m}=\frac{m((m-1)k)!}{((m-2)k+2)!}$$
\end{corollary}

\begin{proof}
By Lemma~\ref{l:alwaysacycle}, the circular order of any labelled
$m$-edge-coloured tree with $k$ vertices is a $k$-cycle. By
Corollary~\ref{c:tkformula} the number of such trees with a given circular
order is $T_{k,m}$. Since any $k$-cycle can arise, we have
$U_{k,m}=\frac{1}{(k-1)!}T_{k,m}$, giving the result.
\end{proof}

Note that this result is
already known~\cite{ckss04},~\cite[5.28, p124]{stanley99}.

\section{generalized Induction}
\label{s:induction}

In this section we give the definition of a generalized induction
on labelled $m$-edge-coloured trees with $k$ vertices.
Generalized induction
generates new labelled $m$-edge-coloured trees with the same number
of vertices starting with a given such tree, and the transitive
closure is an equivalence relation. We show that the circular order
is an invariant, giving rise to a classification of the equivalence
classes by $k$-cycles in the symmetric group of degree $k$.

Given a labelled $m$-edge-coloured tree $G$ and integers
$i, j \in \{1, \ldots, m \}$ we define a maximal
$S_i$ - $S_j$ chain $B$ in $G$ to be a (linear)
subtree of $G$ whose edges are only coloured $S_i$ and $S_j$
such that no other edges incident to $B$ are coloured by $S_i$ or $S_j$.

\begin{definition}
Let $G$ be a labelled $m$-edge-coloured tree with $k$ vertices. Fix
$i, j \in \{1, \ldots, m \}$  with $i < j$. Let $B$ be a maximal
$S_i - S_j$ chain in $G$. Define $R^B_{i, j}(G)$ to be the labelled
$m$-edge-coloured tree with $k$ vertices obtained from $G$ by
\begin{itemize}
\item first removing all subtrees in the complement of the maximal
chain $B$
\item interchanging the vertices of each edge of $B$
coloured by $S_j$
\item interchanging the symbols $S_i$ and $S_j$ on the whole maximal
chain $B$
\item reattaching the previously removed subtrees to $B$ at the vertices with the same label they
were removed from. \end{itemize}

Similarly, define $L^B_{i,j}(G)$, where in the second bullet point
in the above definition we interchange the vertices of each edge coloured
by $S_i$ rather than those coloured by $S_j$. We also set
$R_i^B:= R^B_{i,i+1}$ and $L^B_i:=L^B_{i, i+1}$ and we will write
$R_i$ and $L_i$ if $B$ is clear from the context.
\end{definition}

\begin{remark}
\begin{enumerate}
\item[(1)]Induction can also be defined on $m$-edge-coloured trees:
For an $m$-edge-coloured tree with $k$ vertices, choose an
arbitrary vertex-labelling, apply induction, and then remove the vertex
labelling. It is clear that this is independent of the
vertex-labelling chosen.
\item[(2)]The inductions $R^B_{i,j}$ and $L^B_{i,j}$ are mutually
inverse maps.
\end{enumerate}
\end{remark}

\begin{lemma}\label{LemmaB}
Let  $i, j \in \{1, \ldots, m \}$ with $i<j$ and $B$ be a maximal
$S_i-S_j$-chain with no incident edges coloured by $S_{i+1}, \ldots, S_{j-1}$.
Then we have
\begin{align*}
R_{i,j}^B(G) &= L_iL_{i+1}\cdots L_{j-2}R_{j-1}R_{j-2}R_{j-3}\cdots R_i \\
&= R_{j-1}R_{j-2}\cdots R_{i+1}R_iL_{i+1}L_{i+2}\cdots L_{j-1}; \\
L_{i,j}^B(G) &= L_iL_{i+1}\cdots L_{j-2}L_{j-1}R_{j-2}R_{j-3}\cdots R_i \\
&= R_{j-1}R_{j-2}\cdots R_{i+1}L_iL_{i+1}L_{i+2}\cdots L_{j-1},
\end{align*}
where, in each case, the inductions of form $R_p$ and $L_p$ are applied to
all maximal chains contained in $B$.
In particular, the induction $R^B_{i, j}(G)$ can be written as a product of
inductions of the form $R_p$ or $L_p$ for $p=i, i+1, \ldots, j-1$.
\end{lemma}

\begin{proof}
Suppose first that $B$ has the following form:
$$\xymatrix{a_1 \ar@{-}[r]^{S_i} & a_2 \ar@{-}[r]^{S_j} & a_3
\ar@{-}[r]^{S_i} & a_4 & \cdots & a_{r-1} \ar@{-}[r]^{S_i} & a_r}.$$
Then, in $R^B_{i,j}(G)$, $B'$ becomes the chain
$$B'= \xymatrix{a_1 \ar@{-}[r]^{S_j} & a_3 \ar@{-}[r]^{S_i} & a_2
\ar@{-}[r]^{S_j} & a_5 \ar@{-}[r]^{S_i} & a_4 & \cdots & a_{r-2}
\ar@{-}[r]^{S_j} & a_r}.$$
On the other hand, applying $R_{j-2}R_{j-3}\cdots R_{i+1}R_i$ (with
each induction applying to all the maximal chains of the appropriate type
contained in $B$), we obtain the maximal $S_{j-1}-S_j$-chain
$$\xymatrix{a_1
\ar@{-}[r]^{S_{j-1}} & a_2 \ar@{-}[r]^{S_j} & a_3
\ar@{-}[r]^{S_{j-1}} & a_4 & \cdots & a_{r-1} \ar@{-}[r]^{S_{j-1}} &
a_r}.$$
Next, apply $R_{j-1}^B$ to get the maximal $S_{j-1}-S_j$ chain:
$$\xymatrix{a_1
\ar@{-}[r]^{S_{j}} & a_3 \ar@{-}[r]^{S_{j-1}} & a_2
\ar@{-}[r]^{S_{j}} & a_5 & \cdots & a_{r-2} \ar@{-}[r]^{S_{j}} &
a_r}.$$
It is then clear that subsequently applying
$L_iL_{i+1}\cdots L_{j-2}$
(in each case applying the induction to all
the maximal chains of appropriate type contained in the full
subgraph on the vertices $a_1,a_2,\ldots ,a_r$) gives the chain $B'$.

The proof works in a similar way for the other configurations of
maximal $S_i-S_j$-chains, i.e.\ for the above case with $S_i$ and
$S_j$ switched and for the chains of the form  $$\xymatrix{a_1
\ar@{-}[r]^{S_i} & a_2 \ar@{-}[r]^{S_j} & a_3 \ar@{-}[r]^{S_i} & a_4
& \cdots & a_{r-1} \ar@{-}[r]^{S_j} &
a_r}$$ and  $$\xymatrix{a_1 \ar@{-}[r]^{S_j} & a_2 \ar@{-}[r]^{S_i}
& a_3 \ar@{-}[r]^{S_j} & a_4 & \cdots & a_{r-1} \ar@{-}[r]^{S_j} &
a_r}.$$
The other identities are proved similarly.
\end{proof}

\begin{definition}
Call two labelled $m$-edge-coloured trees with $k$ vertices $G,G'$
\emph{induction equivalent} if there is a
sequence of inductions taking $G$ to $G'$, either of the form
$L_i$, for $1 \leq i \leq m-1$ or of the form $R_i$, for $1 \leq i \leq m-1$.
Clearly this is a reflexive relation. It is symmetric since $L_i$ and
$R_i$ are inverse maps and it is easy to see that it is transitive.
Hence it is an equivalence relation. We write $\Gamma(G)$ for
the equivalence class containing $G$.
\end{definition}

We next show that two labelled
$m$-edge-coloured trees with $k$ vertices are induction equivalent if
and only if they have the same circular order.
Firstly, Lemmas~\ref{InvarianceGeneralInduction} and ~\ref{OldLemma 3.3}
show that the circular order is invariant under an induction of the form
$R_i$ or $L_i$. Key to showing the converse is
Proposition~\ref{PropC}, which shows that every labelled $m$-edge
coloured tree is induction equivalent to one whose labels are only
$S_1$ and $S_m$, which is necessarily a line.
This, together with a calculation
of the order of induction (Lemma~\ref{OrderofInduction}) allows
a counting argument for proving the converse; see
Theorem~\ref{InductionCircularOrder}.

\begin{lemma}\label{InvarianceGeneralInduction}
Let $G$ be a labelled $m$-edge-coloured tree with $k$ vertices containing
a maximal $S_i$-$S_j$ chain $B$ with no incident edges coloured
$S_l$, $i< l <j$. Then the circular order, $\sigma_G$, is unchanged after
an induction of the form $R^B_{i,j}$ or $L^B_{i,j}$ is applied.
\end{lemma}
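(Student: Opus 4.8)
The plan is to reduce to the \emph{adjacent} case $j=i+1$ and then argue directly with the permutations $S_1,\dots,S_m$ defining $\sigma_G$. Indeed, by Lemma~\ref{LemmaB} (whose hypothesis is exactly the one imposed here), $R^B_{i,j}(G)$ is a product of inductions of the form $R_l$ with $i\le l\le j-1$, i.e.\ a composite of inductions $R^{B'}_{l,l+1}$ for suitable maximal $S_l$-$S_{l+1}$ chains $B'$; since a composite of operations each preserving the circular order again preserves it, it suffices to show that $R^{B'}_{l,l+1}$ preserves $\sigma$ for an arbitrary maximal $S_l$-$S_{l+1}$ chain $B'$. The statement for $L^B_{i,j}$ then follows at once: by Remark~\ref{RemarkGenInd}(4) it is the inverse of $R^B_{i,j}$, and the inverse of a circular-order-preserving bijection is again circular-order-preserving.

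So fix $l$, a labelled $m$-edge-coloured tree $H$, a maximal $S_l$-$S_{l+1}$ chain $B'$ in $H$ with vertex set $W\subseteq\{1,\dots,k\}$, and put $H'=R^{B'}_{l,l+1}(H)$; write $S_r$ and $S'_r$ for the permutations of $\{1,\dots,k\}$ attached to $H$ and $H'$. Two structural observations do most of the work. First, since $B'$ is maximal, every edge coloured $S_l$ or $S_{l+1}$ that meets $W$ is an edge of $B'$; hence $S_l$ and $S_{l+1}$ preserve the partition $\{1,\dots,k\}=W\sqcup(\{1,\dots,k\}\setminus W)$, and so do $S'_l$ and $S'_{l+1}$, because the corresponding chain in $H'$ is again a maximal $S_l$-$S_{l+1}$ chain on the vertex set $W$. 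Second, the only edges of $H$ altered by $R^{B'}_{l,l+1}$ are the edges of $B'$ itself (all of whose endpoints lie in $W$), since the subtrees in the complement of $B'$ are detached and then reattached at vertices carrying the same labels; consequently $S'_r=S_r$ for every $r\notin\{l,l+1\}$, and $S'_l,S'_{l+1}$ agree with $S_l,S_{l+1}$ outside $W$. Combining these, $\sigma_{H'}=S_m\cdots S_{l+2}\,(S'_{l+1}S'_l)\,S_{l-1}\cdots S_1$, so, since $S'_{l+1}S'_l=S_{l+1}S_l$ outside $W$, it remains only to check this identity on $W$.

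For that I would unwind Definition~\ref{GenInduction}. Let $\tau$ be the permutation of $\{1,\dots,k\}$ transposing the two endpoints of each $S_{l+1}$-coloured edge of $B'$ and fixing all other vertices; these edges being pairwise vertex-disjoint, $\tau$ is an involution, and by maximality of $B'$ one has $\tau|_W=S_{l+1}|_W$. The second bullet of Definition~\ref{GenInduction} relabels $B'$ precisely by $\tau$, and the third swaps the colours $S_l\leftrightarrow S_{l+1}$; tracing the four steps through, one sees that in $H'$ the $S_l$-edges of the chain are exactly the former $S_{l+1}$-edges of $B'$ (which $\tau$ fixes set-wise), while the $S_{l+1}$-edges of the chain are the $\tau$-images of the former $S_l$-edges of $B'$. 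Reading a set of pairwise disjoint edges as the product of the associated transpositions, this means $S'_l|_W=S_{l+1}|_W$ and $S'_{l+1}|_W=\tau\,(S_l|_W)\,\tau^{-1}$. Using $\tau|_W=S_{l+1}|_W$ and $(S_{l+1}|_W)^2=\id$, we obtain on $W$
$$S'_{l+1}S'_l=(\tau\,S_l\,\tau^{-1})\,S_{l+1}=(S_{l+1}\,S_l\,S_{l+1})\,S_{l+1}=S_{l+1}\,S_l,$$
whence $\sigma_{H'}=\sigma_H$, completing the adjacent case and with it the lemma.

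I expect the main obstacle to be the bookkeeping at the start of the last paragraph: confirming that ``the new $S_l$-edges are the old $S_{l+1}$-edges'' and ``the new $S_{l+1}$-edges are $\tau$ applied to the old $S_l$-edges'' holds uniformly over all configurations of $B'$ (which colour sits at each end, and the parity of the length of $B'$), and that the relabelling of the second bullet of Definition~\ref{GenInduction} really is the involution $\tau$. Once this is pinned down, the rest is formal.
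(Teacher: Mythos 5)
Your argument is correct, but it is organised quite differently from the paper's proof. The paper works with the general $R_{i,j}$ (and $L_{i,j}$) directly: it records the relations between the maps $S_r$ and $S'_r$ on the chain and then checks $\sigma_G(a)=\sigma_{G'}(a)$ vertex by vertex for $a$ in $B$, splitting into three cases according to whether $a$ carries an incident edge of colour less than $i$, whether $S_jS_i(a)$ carries one of colour greater than $j$, or neither. You instead reduce to the adjacent case $j=i+1$ via Lemma~\ref{LemmaB} (whose hypothesis is indeed exactly the one assumed here, so there is no circularity -- and note you correctly avoid citing Corollary~\ref{OldLemma 3.3}, which the paper deduces \emph{from} this lemma), and then prove the adjacent case by a single global identity: with $\tau$ the relabelling involution of Definition~\ref{GenInduction}, you get $S'_l|_W=S_{l+1}|_W=\tau|_W$ and $S'_{l+1}|_W=\tau(S_l|_W)\tau^{-1}$, whence $S'_{l+1}S'_l=S_{l+1}S_l$ on $W$ and trivially off $W$, so $\sigma$ is unchanged. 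I checked your bookkeeping claim (new $S_l$-edges are the old $S_{l+1}$-edges as label pairs, new $S_{l+1}$-edges are the $\tau$-images of the old $S_l$-edges); it does hold uniformly in the end-colours and the parity of the chain, and it reproduces the explicit chains in the proof of Lemma~\ref{LemmaB}. Your route buys a cleaner, case-free computation that moreover treats all vertices, including those outside the chain, explicitly (the paper only verifies vertices of $B$); the paper's direct case analysis, on the other hand, is self-contained (no appeal to Lemma~\ref{LemmaB}) and makes visible exactly where the hypothesis on intermediate colours enters, which is what Remark~\ref{r:notinvariant} then exploits. One small point worth making explicit in your $L_{i,j}$ step: to use Remark~\ref{RemarkGenInd}(4) you should observe that the image chain in $L^B_{i,j}(G)$ is again a maximal $S_i$--$S_j$ chain with the same incident edge colours (the reattached subtrees keep their attaching colours), so that the already-proved $R_{i,j}$ case applies to it and returns $G$; alternatively you could invoke Remark~\ref{RemarkGenInd}(2) and run your adjacent-case computation once more with $L_{l}$ in place of $R_{l}$.
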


\begin{proof}
We show the result for $R_{i,j}$-induction (the proof for $L_{i,j}$-induction follows a similar
argument).

Let $G$ be a labelled $m$-edge-coloured tree with $k$ vertices and
circular order $\sigma_G$ and containing a maximal $S_i$-$S_j$-chain
$B$ such that no edges coloured $S_l$, for $i<l<j$ are incident
to $B$. Let $G' = R_{i,j}^B(G)$ with circular order $\sigma_{G'}$.
Let $a$ be a vertex in $B$. Let $S'_1, \ldots S'_m$
denote the maps corresponding to the symbols $S_1,\ldots ,S_m$ in
the labelled $m$-edge-coloured tree $G'$.
There are two possible situations to consider.

\emph{Case 1}:
Suppose first that $a$ has edges incident to it which are coloured with
labels $S_t$ with $1\leq t<i$. Consider the edge $e$ incident with
$a$ coloured with label $S_r$ with $r$ minimal.
Let $T$ be the subtree of $G\setminus B$ connected to $a$ via $e$.
Then $\sigma_G(a)$ lies in $T$.
Applying the induction $R_{i,j}^B$ to $G$ results in an $m$-edge-coloured tree
$G'$ in which $a$ is reconnected to $T$ by edge $e$, still coloured
with $S_r$. We see that $\sigma_G(a) = \sigma_{G'}(a)$.

\emph{Case 2}: Suppose that $a$ is not incident with any edges coloured
with $S_l$ for $l<i$.
An easy case-by-case check verifies that $S'_j S'_i (a) = S_j S_i(a)$,
so $S'_j\cdots S'_1(a)=S_j\cdots S_1(a)$.
Any edges incident with $S_jS_i(a)$ with label $S_l$ for $l>j$ are
reattached to the same vertex in $G'$ with the same label.
It follows that $\sigma_G(a) = \sigma_{G'}(a)$.
\end{proof}

\begin{corollary}\label{OldLemma 3.3}
The circular orders of two induction equivalent labelled $m$-edge-coloured
trees with $k$ vertices are the same.
\end{corollary}

\begin{proof}
In Lemma~\ref{InvarianceGeneralInduction} assume that $j=i+1$.
\end{proof}

\begin{remark}
In general, the circular order of a labelled $m$-edge-coloured tree with
$k$ vertices is not invariant under general inductions of the form
$R_{i,j}$ and $L_{i,j}$ with $i<j+1$: in the above proof suppose that we have
an edge $e$ coloured $S_l$, for
$i < l <j$ connected to $S_i(a)$ in the chain $B$ of $G$ and
let $T$ be a subtree of $G$ connected to $e$. Then $\sigma_G(a)$
lies on the subtree $T$. On the other hand in $R^B_{i,j}(G)$, the
edge $S_k$ is connected to $S_i(a) = S'_i S'_j S'_i(a)$ and thus
$\sigma_{G'}(a) \neq \sigma_G(a)$ in general. For an example of this with
$i=1$ and $j=3$ (and $T$ just consisting of the vertex $6$),
see Figure~\ref{f:circularorderbreaks}.

Note that this implies that it is not possible, in general, to write
$R_{i,j}$ as a composition of inductions of the form $R_i$ and $L_i$,
since such inductions preserve the circular order (by
Lemma~\ref{InvarianceGeneralInduction}).
However,  Lemma~\ref{LemmaB} says that sometimes this is possible,
however.
\end{remark}

\begin{figure}
\begin{center}
\psfragscanon
\psfrag{1}{$1$}
\psfrag{2}{$2$}
\psfrag{3}{$3$}
\psfrag{4}{$4$}
\psfrag{5}{$5$}
\psfrag{6}{$6$}
\psfrag{S1}{$S_1$}
\psfrag{S2}{$S_2$}
\psfrag{S3}{$S_3$}
\psfrag{a}{$a$}
\psfrag{R}{$R_{1,3}$}
\psfrag{G}{$G$}
\psfrag{Gp}{$G'$}
\includegraphics[width=6cm]{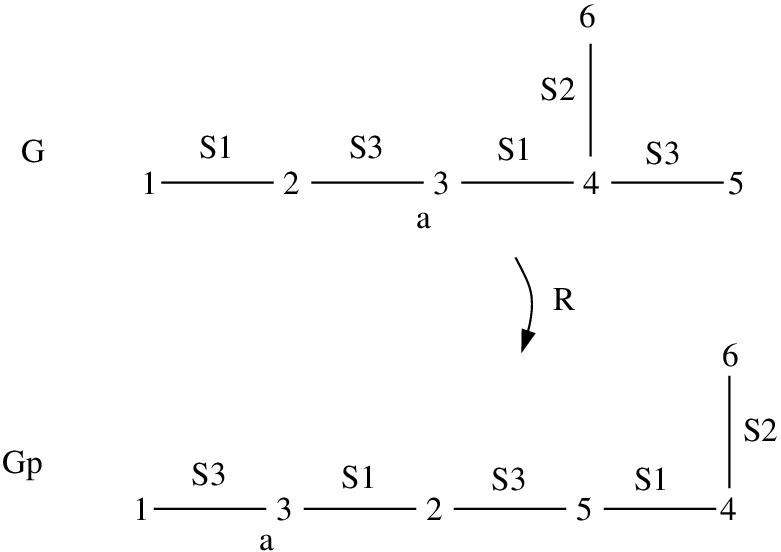}
\end{center}
\caption{The induction $R_{13}^{2,3,4,5}$ does not preserve the circular order: $\sigma_G(3)=6$
while $\sigma_{G'}(3)=5$}
\label{f:circularorderbreaks}
\end{figure}

\begin{prop}\label{PropC}
Let $G$ be a labelled $m$-edge-coloured tree with $k$ vertices.
Then there exists a labelled $m$-edge-coloured tree, induction
equivalent to $G$, whose edges are only coloured by $S_1$ and $S_m$.
\end{prop}

\begin{proof}
We first show that there exists a labelled $m$-edge-coloured tree
$G_2$ with $k$ vertices that is induction equivalent to $G$, none of
whose edges is coloured with $S_2$, by removing the symbols $S_2$
one by one. Firstly, remove all edges coloured with symbols $S_4,
\ldots, S_m$ and call the resulting labelled $3$-edge-coloured tree
$\widetilde{G}$. By \cite[Prop.\ 5.2]{cfz11} there exists a sequence
of inductions of the form $R_1$ and $L_2$ taking
$\widetilde{G}$ to a labelled $3$-edge-coloured tree
$\widetilde{G}_2$ with no edge coloured $S_2$. Let $G_2$ be the
labelled $m$-edge-coloured tree $\widetilde{G}_2$ with the detached
edges reattached (to the vertices with the same label). Since none
of the detached edges are coloured with $S_1, S_2,$ or $S_3$ this
sequence of inductions also takes $G$ to $G_2$ by identifying
maximal chains in $\widetilde{G}$ with corresponding maximal chains
in $G$.

Suppose we have shown that $G$ is induction equivalent to $G_{l-1}$,
where $G_{l-1}$ has no edges coloured $S_2, \ldots, S_{l-1}$.

Then detach all edges coloured $S_{l+2}, \ldots, S_m$ from $G_{l-1}$.
Call the resulting labelled $m$-edge-coloured tree $\widetilde{G}$.
By \cite[Prop.\ 5.2]{cfz11} there is a sequence of inductions of the
form $R_{1,l}$ and $L_l=L_{{l}, {l+1}}$ taking $\widetilde{G}$ to
$\widetilde{G}_l$, where $\widetilde{G}_l$ has no edges coloured
$S_2, \ldots, S_l$. Reattach the detached edges and call the
resulting labelled $m$-edge-coloured tree $G_l$. Since none of the
reattached edges are coloured by $S_1, S_l,$ or $S_{l+1}$, this
sequence of inductions takes $G_{l-1}$ to $G_l$ by identifying the
maximal chains in $\widetilde{G}$ with the maximal chains in
$G_{l-1}$. By Lemma~\ref{LemmaB}, each application of $R_{1,l}$ can be written
as a composition of inductions $R_p$, $L_p$, $p=1,2,\ldots ,l-1$,
since $G_{l-1}$ has no edges coloured $S_2,\ldots ,S_{l-1}$.

Note that none of the symbols $S_2, \ldots, S_l$ appears
in $G_l$. Hence, by induction on $l$, we can construct $G_{m-1} =
G'$ with no symbols $S_2, \ldots, S_{m-1}$ and a sequence of
inductions, each of the form $R_p$ or $L_p$, with $1\leq p \leq m-1$,
taking $G$ to $G'$.
\end{proof}

\begin{lemma}\label{OrderofInduction}
\begin{enumerate}
\item[(a)]
Let $k$ be odd, let $i \neq j$ and let $G$ be a labelled $m$-edge-coloured
tree with $k$ vertices whose edges are coloured with $S_i$ and $S_j$ only.
Let $\shape$ be the underlying unlabelled tree of $G$.
Then $R_{i,j}$ and $L_{i,j}$ have order $k$ on $G$, producing $k$ distinct
trees with shape $\shape$.
\item[(b)]
Let $k$ be even, $i \neq j$ and let $G$ be a labelled $m$-edge-coloured
tree with $k$ vertices and whose edges are coloured with
 $S_i$ and $S_j$ only. Let $\shape$ be the underlying unlabelled tree
of $G$.
Then $R_{i,j}$ and $L_{i,j}$ have order $k$ on $G$, producing $k/2$
distinct labelled trees whose underlying unlabelled tree is $\shape$ in
each case and
$k/2$ distinct labelled trees whose underlying unlabelled tree is
$R_{i,j}(\shape)$.
\end{enumerate}
\end{lemma}

\begin{proof}
(a)  Since $\shape$ contains only the symbols $S_i$ and
$S_j$, it is a line. Suppose the line is drawn horizontally and
suppose the leftmost edge has colour $S_j$ and vertices $a_1$ and
$a_2$ from left to right. Then in $(R_{i,j}^G)^d(G)$, if it is drawn with
orientation given by $ \xymatrix{ \cdot \ar@{-}[r]^{S_i} & a_1 \ar@{-}[r]^{S_j}
& \cdot}$ (where one of the edges may not exist), the vertex $a_1$ is
the $d^{th}$ vertex from the left. It is clear that all the induced
trees $(R_{i,j}^G)^d(G)$ have the same underlying unlabelled tree (those for
$d$ odd should be read from right to left).

(b) The proof is similar to the one in (a), except that for $d$ odd,
the underlying unlabelled tree of $(R_{i,j}^G)^d(G)$ is $R_{i,j}(\shape)$.
\end{proof}

\begin{remark} \label{RemarkOnOrderofInduction}
Since in the context of Lemma~\ref{OrderofInduction} the underlying unlabelled
tree of $G$ is a line, we can replace $R_{i,j}$ or $L_{i,j}$ in
Lemma~\ref{OrderofInduction} by one of the products in Lemma~\ref{LemmaB}.
\end{remark}

We finally obtain:

\begin{theorem} \label{InductionCircularOrder}
Let $G$ and $G'$ be labelled $m$-edge-coloured trees with $k$ vertices.
Then $G'$ is in $\Gamma(G)$ if and only if $\sigma_G = \sigma_{G'}$.
\end{theorem}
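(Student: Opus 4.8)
The statement follows by combining two facts already in hand: induction leaves the circular order unchanged (Corollary~\ref{OldLemma 3.3}), and every labelled $m$-edge-coloured tree can be carried by inductions to one coloured only by $S_1$ and $S_m$ (Corollary~\ref{Cor to PropC}). Throughout we take $m\geq 3$; for $m\leq 2$ there are no inductions and the statement is vacuous or degenerate. One implication is immediate: if $G'\in\Gamma(G)$, then $G'$ is obtained from $G$ by a sequence of inductions $R_i,L_i$, each of which preserves the circular order by Corollary~\ref{OldLemma 3.3}, so $\sigma_G=\sigma_{G'}$.

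For the converse, suppose $\sigma_G=\sigma_{G'}=:\sigma$; this is a $k$-cycle by Lemma~\ref{l:alwaysacycle}. By Corollary~\ref{Cor to PropC} choose $\{S_1,S_m\}$-coloured labelled trees $G_*$ and $G'_*$ on $k$ vertices with $G$ induction equivalent to $G_*$ and $G'$ to $G'_*$; by Corollary~\ref{OldLemma 3.3} both $G_*$ and $G'_*$ have circular order $\sigma$. So it suffices to prove that any two $\{S_1,S_m\}$-coloured labelled trees on $k$ vertices with the same circular order are induction equivalent. Such a tree has all vertices of degree at most $2$, hence is a line; its underlying unlabelled shape is the unique $\{S_1,S_m\}$-coloured line when $k$ is odd, and is one of the two lines $\shape_1,\shape_m$ when $k$ is even.

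The whole of $G_*$ is a maximal $S_1$-$S_m$ chain with no incident edges coloured $S_2,\dots,S_{m-1}$, so Lemma~\ref{LemmaB} writes $R_{1,m}$ as a composition of the inductions $R_\ell$, $\ell=1,\dots,m-1$; here we use Remark~\ref{RemarkGenInd}(4) to see that $R_{m-1}=L_{m-1}^{-1}$ is admissible for the equivalence $\Gamma$, as are all $R_\ell,L_\ell$ with $1\leq\ell\leq m-1$. Hence the $R_{1,m}$-orbit of $G_*$ is contained in $\Gamma(G_*)=\Gamma(G)$, and by Corollary~\ref{OldLemma 3.3} every member of it has circular order $\sigma$. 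By Lemma~\ref{OrderofInduction} this orbit consists of exactly $k$ distinct trees: all with the same shape when $k$ is odd, and $k/2$ of shape $\shape_1$ together with $k/2$ of shape $\shape_m$ when $k$ is even. I now claim this orbit is the whole set of $\{S_1,S_m\}$-coloured labelled trees on $k$ vertices with circular order $\sigma$. Indeed, when $k$ is odd the $\{S_1,S_m\}$-coloured line has trivial automorphism group (its two leaves lie on edges of different colours, so cannot be exchanged), giving $k!$ labelled trees of that shape; relabelling the vertices by a permutation sends such a tree to another of the same shape and conjugates its circular order, and since conjugation is transitive on the $(k-1)!$ $k$-cycles while every such tree has a $k$-cycle as its circular order (Lemma~\ref{l:alwaysacycle}), exactly $k!/(k-1)!=k$ of them have circular order $\sigma$. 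When $k$ is even the same reasoning, using that each of $\shape_1,\shape_m$ has automorphism group of order $2$ (the reversal of the line), gives $k!/2$ labelled trees of each shape and hence $k/2$ of each with circular order $\sigma$, so $k$ in total. In both cases the $R_{1,m}$-orbit of $G_*$ therefore exhausts these $k$ trees, and in particular contains $G'_*$. Consequently $G'_*\in\Gamma(G)$, and since $G'$ is induction equivalent to $G'_*$ we conclude $G'\in\Gamma(G)$.

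The crux, and the only step requiring real work beyond bookkeeping, is this last matching: verifying that the $k$-element $R_{1,m}$-orbit is \emph{all} of the $\{S_1,S_m\}$-coloured labelled trees of the prescribed circular order. This is what forces the automorphism-group count for the two line shapes and the split according to the parity of $k$; everything else is assembled from Corollaries~\ref{Cor to PropC} and~\ref{OldLemma 3.3} and Lemmas~\ref{LemmaB} and~\ref{OrderofInduction}. Alternatively one could sidestep the enumeration by extracting from the proof of Lemma~\ref{OrderofInduction} an explicit description of how $R_{1,m}$ shifts the vertices along the line, and checking directly that the $k$ iterates realise all $k$ presentations of the $k$-cycle $\sigma$ by such a line.
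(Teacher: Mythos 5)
Your proof is correct and follows essentially the same route as the paper: reduce via Corollary~\ref{Cor to PropC} (and the proof of Corollary~\ref{CorE}) to $\{S_1,S_m\}$-coloured lines, use invariance of the circular order (Corollary~\ref{OldLemma 3.3}), and match the $k$-element $R_{1,m}$-orbit supplied by Lemmas~\ref{LemmaB} and~\ref{OrderofInduction} against the set of such lines with the prescribed circular order, splitting by the parity of $k$. The only cosmetic difference is that you obtain the count of candidate lines by an automorphism-group/equivariance argument, whereas the paper simply notes that such a line is determined by its end vertex label together with the circular order, giving the bound of $k$ (respectively $k/2$ per shape) directly.
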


\begin{proof}
Suppose that $G'$ is in $\Gamma(G)$. We have, by Lemma~\ref{OldLemma 3.3},
that $\sigma_G = \sigma_{G'}$. Conversely,
suppose that $\sigma_G = \sigma_{G'}$. By Proposition~\ref{PropC} there exist
labelled $m$-edge-coloured trees with $k$ vertices, $G_*$, in $\Gamma(G)$ and
$G'_*$ in $\Gamma(G')$ where the underlying unlabelled trees of $G_*$ and
$G'_*$ contain symbols $S_1$ and $S_m$ only.

Suppose that $k$ is odd and that $ G_*$ and $G'_*$ are as follows:
$$ G_* : \xymatrix{a_1 \ar@{-}[r]^{S_1} & a_2
\ar@{-}[r]^{S_m} & a_3 \ar@{-}[r]^{S_1} & a_4 & \cdots & a_{k-1}
\ar@{-}[r]^{S_m} & a_k}$$
$$ G'_* : \xymatrix{a'_1 \ar@{-}[r]^{S_1} & a'_2
\ar@{-}[r]^{S_m} & a'_3 \ar@{-}[r]^{S_1} & a'_4 & \cdots & a'_{k-1}
\ar@{-}[r]^{S_m} & a'_k}.$$ Then
$$\sigma_{G_*}= (a_1 a_3 a_5 \ldots a_k a_{k-1} a_{k-4}\ldots a_2) =
 (a'_1 a'_3 a'_5 \ldots a'_k a'_{k-1} a'_{k-4}\ldots a'_2) =
 \sigma_{G'_*}.$$ Because the underlying unlabelled tree of $G'_*$ is a line,
$G'_*$ is determined by  $a'_1$ and its circular order.
Thus, given $G_*$ there are at most $k$ possibilities
 for $G'_*$ such that $\sigma_{G_*} = \sigma_{G'_*}$ holds.

By Lemma~\ref{OrderofInduction} and Lemma~\ref{OldLemma 3.3},
repeatedly applying either $R_{i,j}$-induction or $L_{i,j}$-induction to
$G_*$ gives $k$ distinct labelled $m$-edge-coloured trees $H$ with $k$
vertices satisfying  $\sigma_{G_*} = \sigma_H$.
Hence any $G'_*$ such that  $\sigma_{G_*} = \sigma_{G'_*}$  must be one
 of these trees $H$. Therefore, using Remark~\ref{RemarkOnOrderofInduction},
$G'_*$ is in $\Gamma(G_*)$ and thus  $G'$ is in $\Gamma(G)$.

Suppose that $k$ is even. In this case, there are at most $k/2$
distinct possibilities for $G'_*$ because of the symmetry of the
underlying unlabelled tree. The result then follows from
Lemma~\ref{OrderofInduction}, Remark~\ref{RemarkOnOrderofInduction},
and Lemma~\ref{OldLemma 3.3} as above.
\end{proof}

\begin{lemma} \label{l:OrderofInduction2}
Let $G$ be a labelled $m$-edge-coloured tree with $k$ vertices containing
a maximal $S_i$-$S_j$ chain $B$ with $l$ vertices and no incident edges
coloured $S_l$,
$i<l<j$. Then $R_{i,j}^B$ has order $l$ on $G$. In particular, $R_i$ and
$L_i$ have finite order on any maximal chain in $G$.
\end{lemma}
\begin{proof}
This follows easily from Lemma~\ref{OrderofInduction}.
\end{proof}

Let $G$ be a labelled $3$-edge-coloured tree with $k$ vertices.
Let $\Gamma'(G)$ be the smallest set of labelled $3$-edge-coloured trees
with $k$ vertices closed under $R_1$ and $L_2$
(as defined in~\cite[\S4]{cfz11}, identifying the symbols $+,=,-$ with
$S_1,S_2$ and $S_3$ respectively). Then:

\begin{corollary} \label{l:cfzrelationship}
\begin{enumerate}
\item[(a)] Two labelled $m$-edge-coloured trees with $k$ vertices
are induction equivalent if and only if there is a sequence of inductions
only of the form $R_p$ (respectively, only of the form $L_p$) taking
one to the other.
\item[(b)]
If $G$ is a $3$-edge-coloured tree with $k$ vertices, then
$\Gamma(G)=\Gamma'(G)$.
\end{enumerate}
\end{corollary}
\begin{proof}
Part (a) follows from Lemma~\ref{l:OrderofInduction2}, using the fact
that $L_p$ (on a given maximal chain) is the inverse of $R_p$.
For part (b), note that (on a given maximal chain), $L_1$ is the inverse
of $R_1$ and $R_1$ has finite order. Similarly, $R_2$ is the inverse of $L_2$
and $L_2$ has finite order.
\end{proof}

Finally in this section, we note the interesting fact that every induction
equivalence class of labelled $m$-edge coloured trees contains a labelling
of any given unlabelled $m$-edge coloured tree. We first have:

\begin{lemma} \label{l:allequivalent}
Any two (unlabelled) $m$-edge coloured trees are induction equivalent.
\end{lemma}
\begin{proof}
Let $\shape$ and $\shape'$ be arbitrary $m$-edge-coloured trees with $k$
vertices. By Lemma~\ref{PropC}, $\shape$ is
induction equivalent to a labelled $m$-edge-coloured tree with $k$
vertices containing only the symbols $S_1$ and $S_m$; similarly for $\shape'$.
If $k$ is odd there is only one such tree:
$$\xymatrix{ \bullet \ar@{-}[r]^{S_m} & \bullet
\ar@{-}[r]^{S_1} & \bullet \ar@{-}[r]^{S_m} & \bullet  & \cdots & \bullet
\ar@{-}[r]^{S_1} & \bullet},$$
and it follows that $\shape$ and $\shape'$ are induction
equivalent. If $k$ is even, there are two such trees:
$$ \shape_m : \xymatrix{ \bullet \ar@{-}[r]^{S_m} & \bullet \ar@{-}[r]^{S_1} & \bullet
\ar@{-}[r]^{S_m} & \bullet  & \cdots & \bullet \ar@{-}[r]^{S_m} &
\bullet}$$
$$ \shape_1 : \xymatrix{ \bullet \ar@{-}[r]^{S_1} & \bullet \ar@{-}[r]^{S_m} & \bullet
\ar@{-}[r]^{S_1} & \bullet  & \cdots & \bullet \ar@{-}[r]^{S_1} &
\bullet}.$$
Then $R_{1, m}^{\shape_m}(\shape_m) =
\shape_1$, so $\shape_m$ is induction equivalent to
$\shape_1$ by Lemma~\ref{LemmaB}. It follows that $\shape$ and $\shape'$
are induction equivalent in this case also.
\end{proof}

\begin{corollary}
Let $G$ be a labelled $m$-edge-coloured tree with $k$ vertices. Then every
possible $m$-edge-coloured tree with $k$ vertices appears as the underlying
unlabelled $m$-edge-coloured tree of a labelled $m$-edge-coloured tree in
$\Gamma(G)$.
\end{corollary}
\begin{proof}
Given a labelled $m$-edge-coloured tree with $k$ vertices, $G$, whose
underlying unlabelled tree is $\shape$, and an arbitrary $m$-edge-coloured tree
with $k$ vertices, $\shape'$, Lemma~\ref{l:allequivalent} shows that
$\shape$ and $\shape'$ are induction equivalent. It follows that $G$
and a vertex-labelling of $\shape'$ are induction equivalent,
and the result follows.
\end{proof}


\section{Induction on $m$-Angulations of Polygons}

By Theorem~\ref{t:bijectionshapesdissections}, the set of labelled
$m$-edge-coloured trees with $k$ vertices is in bijection with the set of
$m$-gon-labelled diagonal-coloured $m$-angulations of a polygon $P_n$
with $n=(m-2)k+2$ sides, up to rotation, where $m$-gon-labelled means that
the $m$-gons are labelled with $1,2,\ldots ,k$. The colours
satisfy a \emph{boundary rule}, i.e.\ those bounding each $m$-gon in the
$m$-angulations must be the symbols $S_1, \ldots, S_m$ in clockwise order.

Our aim in this section is to rewrite induction in the language of
$(m-2)$-clusters. The set of $m$-angulations of $P_n$ containing $k$ $m$-gons
is in bijection with the set of $(m-2)$-clusters of type $A_{k-1}$ by
Fomin-Reading~\cite{fominreading05}.
Mutation corresponds to rotating a diagonal one step anticlockwise
in the subpolygon obtained when the diagonal is removed.

We shall see that induction has a description as a composition
of such mutations and appropriate recolourings of diagonals and
vertex relabellings.
An easy induction argument based on cutting an $m$-angulation
along one of its sides shows that:

\begin{lemma}\label{BoundaryEdges}
Every $m$-angulation $\M$ of a polygon has at least two $m$-gons with
$m-1$ boundary edges or is an $m$-angulation of an $m$-gon.$\Box$
\end{lemma}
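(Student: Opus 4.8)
The plan is to argue by induction on the number $k$ of $m$-gons in $\M$, using the dual tree language from Section~\ref{s:bijections} wherever it makes the bookkeeping cleaner. The base case $k=1$ is exactly the exceptional conclusion: $\M$ is an $m$-angulation of an $m$-gon. For $k=2$ both $m$-gons have $m-1$ boundary edges, since two adjacent $m$-gons share a single diagonal. So assume $k\geq 3$. An $m$-gon of $\M$ with at most $m-2$ boundary edges is one incident with at least two diagonals; call such an $m$-gon \emph{internal}. I want to show there are at least two $m$-gons that are not internal, i.e.\ have exactly $m-1$ boundary edges (an $m$-gon with $m$ boundary edges would force $k=1$).

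First I would translate into the dual tree. Under the bijection of Theorem~\ref{t:bijectionshapesdissections}, an $m$-angulation of $P_n$ with $k$ $m$-gons corresponds to an $m$-edge-coloured tree $\shape$ with $k$ vertices, where the vertex dual to an $m$-gon has degree equal to the number of diagonals bounding that $m$-gon. Thus an $m$-gon with $m-1$ boundary edges corresponds to a \emph{leaf} of $\shape$ (degree $1$), while an internal $m$-gon corresponds to a vertex of degree $\geq 2$. So the statement becomes: every tree with $k\geq 2$ vertices has at least two leaves — which is the classical fact that a finite tree with at least one edge has at least two leaves (take the two endpoints of a longest path, or sum degrees $=2(k-1)<2k$ to see not all vertices can have degree $\geq 2$). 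This immediately gives the result: $\shape$ has $\geq 2$ leaves, hence $\M$ has $\geq 2$ $m$-gons with $m-1$ boundary edges, unless $k=1$, in which case $\M$ is an $m$-gon.

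Alternatively, if one prefers a self-contained geometric argument avoiding the dual-tree bijection, I would induct directly: pick any diagonal $e$ of $\M$; removing $e$ splits $P_n$ into two polygons $P',P''$, each carrying an induced $m$-angulation with $k',k''\geq 1$ $m$-gons, $k'+k''=k$. By induction each of $P',P''$ either is a single $m$-gon or has two $m$-gons with $m-1$ boundary edges \emph{relative to} $P',P''$; at most one such $m$-gon in each piece can be the one meeting $e$ (and gaining a diagonal when $e$ is reinstated), so each piece contributes at least one $m$-gon with $m-1$ boundary edges in $\M$, and since $P'$ and $P''$ are disjoint these are two distinct $m$-gons. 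The only degenerate case is when a piece is a single $m$-gon, which then is itself an $m$-gon with $m-1$ boundary edges in $\M$ (its one remaining side being $e$).

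The main obstacle is purely the boundary-versus-interior bookkeeping in the direct argument: one must be careful that the $m$-gon of $P'$ adjacent to $e$ is the \emph{only} $m$-gon whose boundary-edge count changes when $e$ is added back, and that when $P'$ itself reduces to a single $m$-gon this $m$-gon still qualifies (it has $m-1$ genuine boundary sides of $P_n$ plus the side $e$). Once that is checked the induction closes. Given how clean the dual-tree reformulation is, I would present that as the main proof and relegate the geometric induction to a remark, or simply cite the two-leaves fact and the bijection of Theorem~\ref{t:bijectionshapesdissections} directly.
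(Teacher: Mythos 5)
Your proposal is correct, and in fact it contains two valid proofs. Your second, geometric argument is essentially the paper's own proof: the paper also inducts by cutting $\M$ along an arbitrary diagonal and applying the induction hypothesis to the two resulting smaller $m$-angulated polygons (the paper states this rather tersely, concluding that "each of these polygons contains an $m$-gon incident with its boundary"; your version spells out the bookkeeping the paper glosses over, namely that only the $m$-gon of each piece meeting the cut diagonal can lose its status when the diagonal is reinstated, and that a piece consisting of a single $m$-gon still qualifies). Your primary argument, however, takes a genuinely different route: translating via the dual-tree construction of Theorem~\ref{t:bijectionshapesdissections}, under which an $m$-gon with exactly one bounding diagonal (equivalently, $m-1$ boundary edges) becomes a leaf, and then invoking the classical fact that a tree on $k\geq 2$ vertices has at least two leaves. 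That reduction is cleaner and shorter — it replaces the boundary-versus-interior bookkeeping with a one-line degree count — at the cost of leaning on the dual-graph-is-a-tree argument already established in the paper (where it is stated for diagonal-coloured $m$-angulations, though the colouring plays no role in that part of the argument); the paper's direct induction is self-contained within Section 8 and needs no appeal to the earlier bijection. Either presentation would be acceptable.
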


\begin{remark} \label{RotationSubLemma}
If all the diagonals in an $m$-angulation are incident with one
vertex, then by moving them one after another,
we get the same $m$-angulation rotated through $2\pi/n$.
\end{remark}

We label the vertices of $P_n$ $1,2,\ldots ,n$ clockwise around
the boundary, and use the notation $[i,j]$ to denote a diagonal in the
polygon connecting vertex $i$ with vertex $j$.

\begin{lemma}\label{Rotation Lemma}
Let $\M$ be an $m$-angulation of $P_n$ with $k$ $m$-gons.
Then there is an explicit sequence of mutations taking
$\M$ to its rotation through $2\pi /n$ anticlockwise.
\end{lemma}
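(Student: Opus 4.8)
The plan is to reduce the general case to the special configuration handled in Lemma~\ref{RotationSubLemma} by first performing a sequence of diagonal rotations that transforms $\M$ into an $m$-angulation whose diagonals are the ``fan'' $[1,m],[1,m+(m-2)],\ldots,[1,m+(k-2)(m-2)]$ based at a single vertex. Once $\M$ is in this fan form, Lemma~\ref{RotationSubLemma} supplies the explicit sequence of rotations realising the rotation through $2\pi/n$; composing the two sequences (and then running the first sequence backwards, rotated by one step, to restore the original shape) gives the required rotation of an arbitrary $\M$. So the real content is: any $m$-angulation of $P_n$ can be brought to fan form by a sequence of anticlockwise diagonal rotations.

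To prove that, I would argue by induction on $k$, the number of $m$-gons. The case $k=1$ is vacuous. For the inductive step, use Lemma~\ref{BoundaryEdges}: $\M$ has an $m$-gon $Q$ with $m-1$ boundary edges of $P_n$, so $Q$ is cut off by a single diagonal $D$. Deleting $Q$ gives an $m$-angulation of an $(m-2)(k-1)+2$-gon, to which the inductive hypothesis applies, producing a fan based at some vertex $v$ and a known rotation sequence. The remaining task is to rotate the diagonal $D$ (and hence the ear $Q$) around the boundary of $P_n$ until it sits in the correct position to extend the smaller fan to a fan on all $k$ $m$-gons based at a common vertex; each single-step move of such an ``ear diagonal'' along the boundary is itself a short explicit composition of anticlockwise diagonal rotations, because moving an $m$-gon past one boundary edge is a local operation inside a $(2m-2)$-gon. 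Care is needed because as $D$ is moved the diagonals of the small fan may have to be re-rotated to keep out of its way; but since at each stage only finitely many diagonals are involved and each lives in a bounded subpolygon, these interactions are controllable.

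The main obstacle I anticipate is the bookkeeping in this last stage: making the ``move the ear around the boundary'' subroutine genuinely explicit while simultaneously tracking how the already-placed fan diagonals must shift, and verifying that every move used is an \emph{anticlockwise} rotation (clockwise moves being allowed only as compositions of anticlockwise ones, per the remark preceding Lemma~\ref{RotationSubLemma}). A cleaner alternative, which I would try first, is to avoid the fan reduction altogether: pick the boundary $m$-gon $Q$ with $m-1$ boundary edges, apply the inductive hypothesis to rotate the complementary $m$-angulation (ignoring $Q$) through $2\pi/n'$ where $n'=(m-2)(k-1)+2$, and then separately rotate the single ear $Q$ and its diagonal $D$ through $2\pi/n$ by a direct sequence of rotations in the $(2m-2)$-gon containing $D$; reconciling the two rotation amounts ($2\pi/n'$ versus $2\pi/n$) is the delicate point, and may force the argument back to the fan normal form. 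Either way, once $\M$ is rotated as desired, Lemma~\ref{Rotation Lemma} follows, and I would close the proof by remarking that the construction is explicit at every step so the resulting sequence of diagonal rotations can be written down.
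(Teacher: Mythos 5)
Your overall skeleton (cut off an ear via Lemma~\ref{BoundaryEdges}, induct on the complement, and treat the ear separately) is the same as the paper's, but the step you rely on to move the ear is where the real content lies, and it fails as stated. If the ear $Q$ is cut off by the diagonal $D=[i,i+m-1]$, the $(2m-2)$-gon containing $D$ consists of $Q$ together with whichever $m$-gon lies across $D$, and the remaining $m-2$ vertices of that neighbour can be arbitrary vertices of $P_n$. Rotating $D$ inside this $(2m-2)$-gon therefore replaces an endpoint of $D$ by a vertex dictated by that neighbour; it produces the shifted ear diagonal (e.g.\ $[i-1,i+m-2]$ for an anticlockwise shift) only when the neighbouring $m$-gon happens to contain the adjacent boundary edge $[i-1,i]$, which need not hold. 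So ``moving an $m$-gon past one boundary edge'' is not a local operation in a single $(2m-2)$-gon, and this undermines both your fan-normal-form reduction and your ``cleaner alternative''; indeed you flag the reconciliation of $2\pi/n'$ with $2\pi/n$ as the delicate point but leave it unresolved.

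The paper supplies exactly the missing device. Let $R$ be the union of \emph{all} $m$-gons incident with the endpoint $i$ of $D$: the induced $m$-angulation of $R$ is a fan at $i$, so Lemma~\ref{RotationSubLemma} rotates it one step anticlockwise; then the reverse sequence (clockwise rotations, each a composition of anticlockwise ones) is applied to $R$ with the newly created ear $M'$ removed. The net effect of the two sequences is that only the ear has moved one boundary step. The inductive hypothesis (induction on the number of sides, with no reduction of the whole angulation to a fan) is then applied to the complement of $M'$, an $(n-(m-2))$-gon, rotating its induced $m$-angulation through one step of its own boundary; glued back along the moved diagonal, this composite is precisely the rotation of $\M$ through $2\pi/n$. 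If you prefer your conjugation-by-a-fan route, the unproven ingredient is that every $m$-angulation can be brought to a fan by diagonal rotations; this can be shown directly (rotate each diagonal bounding an $m$-gon at a fixed vertex, within its $(2m-2)$-gon, until it passes through that vertex, strictly increasing the number of diagonals there), but it is not established by the ear-moving induction you sketch, and the subsequent bookkeeping of ``running the first sequence backwards, rotated by one step'' would still need the inverse rotations expressed as anticlockwise ones, as the paper does.
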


\begin{proof}
We induct on the number $k$ of $m$-gons in the $m$-angulation.
The result is trivial if $k=1$, so consider the general case. By
Lemma~\ref{BoundaryEdges}, there is an $m$-gon
$M$ with a unique internal edge $e$, joining vertices $[i, i+(m-1)]$
for some $i$.
Let $R$ be the subpolygon of $P_n$ given by the union of the $m$-gons
incident with $i$. Applying Remark~\ref{RotationSubLemma}
to the induced $m$-angulation of $R$ to rotate it one step anticlockwise,
we obtain a new $m$-angulation of $R$, and hence of $P_n$,
containing an $m$-gon $M'$ with vertices $i-1,i,i+1,\ldots ,i+(m-1)-1$.
Let $R'$ be the subpolygon of $R$ with $M'$
removed. Applying Remark~\ref{RotationSubLemma} repeatedly
we may rotate the induced $m$-angulation of $R'$ one step clockwise.

We obtain a new $m$-angulation of $P_n$. By induction we may rotate the
subpolygon with $M'$ removed anticlockwise by an explicit sequence.
It is easy to check that the total effect of the above is to rotate the
original $m$-angulation of $P_n$ one step anticlockwise.
\end{proof}

\begin{definition}
Let $\M$ be a diagonal-coloured $m$-angulation of $P_n$.
If there is at least one internal diagonal in $\M$ and
all of the internal diagonals of $\M$ are coloured only
with $S_i$ or $S_{i+1}$ for fixed $i$, we call $\M$ a
\emph{snake $m$-angulation}. A subpolygon of $P_n$ with this property is
called a \emph{snake} subpolygon. Note that in any snake subpolygon the
internal diagonals must be of the form $[i_1,i_2]$, $[i_2,i_3]$, and so on.
\end{definition}

\begin{figure}[H]
\begin{center}
\psfragscanon
\psfrag{S1}{$\scriptstyle S_1$}
\psfrag{S2}{$\scriptstyle S_2$}
\psfrag{S3}{$\scriptstyle S_3$}
\psfrag{S4}{$\scriptstyle S_4$}
\psfrag{Step 1}{Step 1}
\psfrag{Step 2}{Step 2}
\psfrag{Redraw}{Redraw}
\includegraphics[height=9cm]{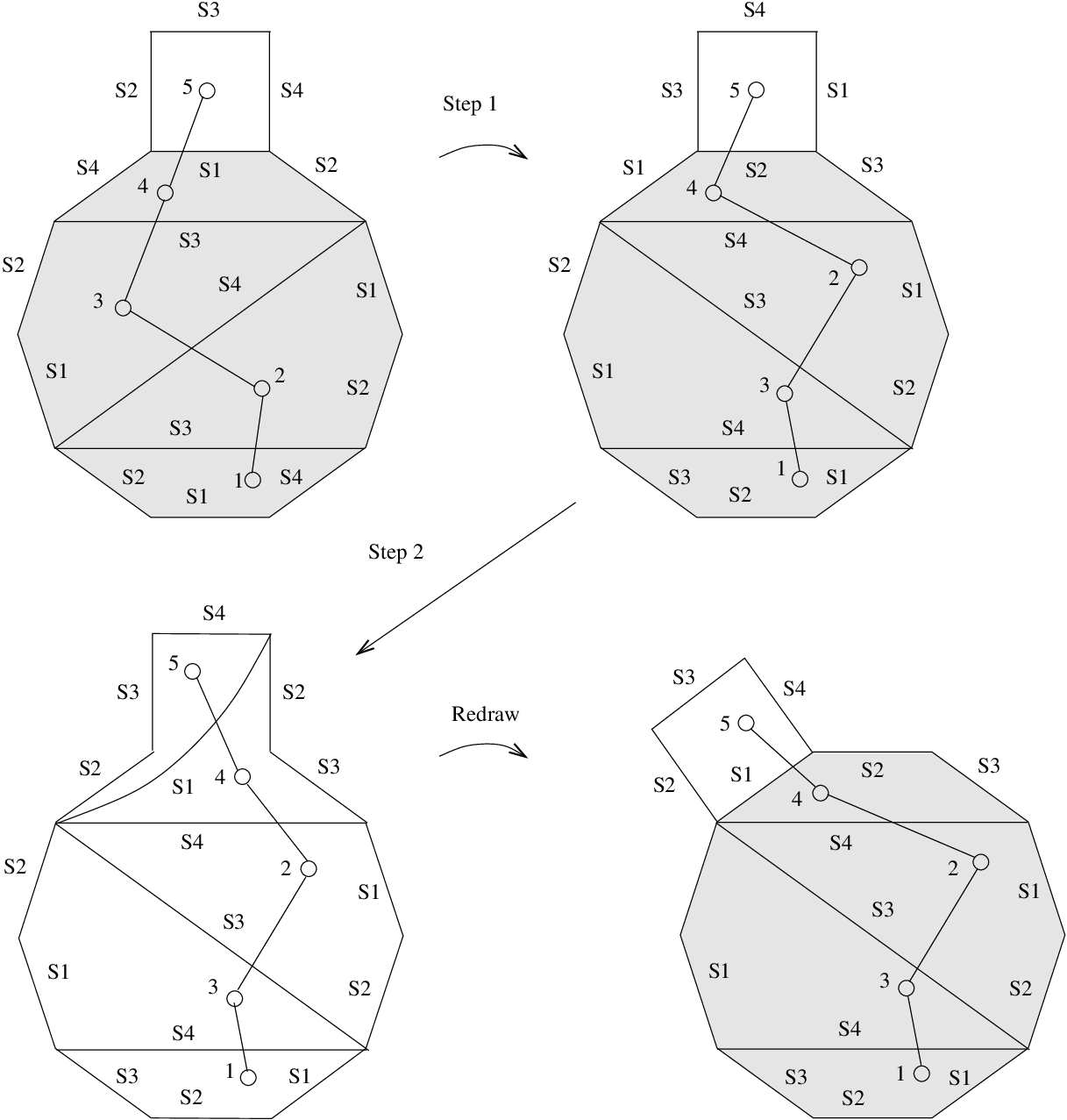} 
\end{center}
\caption{Mutations giving rise to $R_3$ induction (see Proposition~\ref{Induction on Polygon Shapes}).}
\label{f:reattach}
\end{figure}

In a cluster algebra context, snake triangulations
appeared in~\cite[Sect.\ 3.5]{fominzelevinsky03b} and for
general $m$ in~\cite[Sect.\ 5.1]{fominreading05} as
\emph{$m$-snakes}.

Let $\M$ be an $m$-gon-labelled diagonal-coloured $m$-angulation of $P_n$
and let $1\leq i\leq m$. Choose a maximal snake subpolygon $\B$ of $\M$ with
internal diagonals coloured $S_i$ or $S_{i+1}$. Let
${\R}^{\B}_i(\M)$ be defined using the following procedure.

\noindent \textbf{Step 1}: Let $M_1,M_2$ be the two $m$-gons in $\B$ with $m-1$
boundary edges in $\B$, and let $e=e_1$  be the internal
edge of $M_1$ in $\B$. Let $e_2,\ldots ,e_N$ be the other internal edges
in $\B$, numbered so that $e_{i-1}$ and $e_i$ are incident for all $i$.
If $e_1$ has colour $S_i$ (respectively, $S_{i+1}$), mutate edges
$e_2,e_4,\ldots $ (respectively, $e_1,e_3,\ldots $), recolouring
the new diagonals with $S_i$ and using the boundary rule to recolour
the rest of the polygon. The labelling of an $m$-gon is given
by the number of the $m$-gon before mutation whose intersection with the
boundary of $\B$ was the same.

\noindent \textbf{Step 2}: Let $C_1,C_2$ be the union of the connected
components of the complement of $\B$ in $P_n$ incident with $M_1,M_2$
respectively, and
let $D_i = C_i \cup M_i$. Applying Lemma~\ref{Rotation Lemma} to $D_1,D_2$
we get a sequence of mutations rotating the induced $m$-angulation of
each $D_i$ anticlockwise one step around its boundary. We recolour
the diagonals according to the boundary rule. The label of the image of
an $m$-gon under this rotation is the same.

Note that Step 2 has the same effect as detaching each component of $C_i$
in the original $m$-angulation from an edge of $M$ coloured $S_j$,
$j\not=i,i+1$, and reattaching it after Step 1 to a boundary edge of
$M$ with the same symbol $S_j$ (now one step anticlockwise around the boundary
of $M$), keeping the original colouring of the diagonals of $C_i$
(i.e.\ from before Step 1).
Such a boundary edge of $M$ always exists, since $j\not=i,i+1$.

It is easy to see that the above procedure does not depend on the initial
choice of $M_1$. Note also that the procedure commutes with any rotational
symmetry of $P_n$ and so gives a well defined induction on a
diagonal-coloured $m$-angulation of $P_n$ up to rotation.

Comparing the definitions of $R_i^B$ and ${\R}_i^{\B}$ we see that:

\begin{prop} \label{Induction on Polygon Shapes}
Suppose that $1\leq i\leq m$ and let $G$ be a labelled $m$-edge-coloured tree
with $k$ vertices containing a maximal $S_i-S_{i+1}$ chain $B$.
Let $\M$ be the corresponding $m$-gon-labelled
diagonal-coloured $m$-angulation of $P_n$
up to rotation with maximal snake subpolygon $\B$ corresponding to $B$.
Then $R_i^{B}(G)$ corresponds to $R_i^{\B}(\M)$.~$\Box$
\end{prop}

For an example of Proposition~\ref{Induction on Polygon Shapes},
see Figure~\ref{f:reattach}, with the snake subpolygon shaded.

\medskip

\noindent \textbf{Acknowledgements:} We would like to thank Luca Q.
Zamboni for some helpful conversations. RJM would like to thank
Sibylle Schroll and the Department of Mathematics at the University
of Leicester for their kind hospitality.

\bibliographystyle{plain}

\begin{thebibliography}{99}

\bibitem{bischjones97}
D. Bisch, V. Jones,
Algebras associated to intermediate subfactors,
Invent. Math. 128 (1997) 89--157.

\bibitem{cfz11}
J. Cassaigne, S. Ferenczi, L. Q. Zamboni, Combinatorial
trees arising in the study of interval exchange transformations,
European J. Combin. 32 (2011) 1428--1444.

\bibitem{ckss04}
M. Cho, D. Kim, S. Seo,  H. Shin,
Colored Pr\"{u}fer codes for k-edge colored trees,
Electron. J. Combin. 11 (2004), 7 pp.

\bibitem{ferenczizamboni10}
S. Ferenczi, L. Q. Zamboni,
Structure of $k$-interval exchange transformations: induction,
trajectories, and distance theorems,
J. Anal. Math. 112 (2010) 289--328.

\bibitem{fominreading05}
S. Fomin, N. Reading,
Generalized cluster complexes and Coxeter combinatorics,
Int. Math. Res. Not. IMRN 44 (2005) 2709--2757.

\bibitem{fominzelevinsky03b}
S. Fomin, A. Zelevinsky,
$Y$-systems and generalized associahedra,
Ann. of Math. (2) 158 (2003) 977--1018.

\bibitem{hiltonpedersen91}
P. Hilton, J. Pederson,
Catalan numbers, their generalization, and their uses,
Math. Intelligencer 13 (1991) 64--75.

\bibitem{hss99}
I. L. Hofacker, P. Schuster, P. F. Stadler,
Combinatorics of RNA secondary structures,
Discrete Appl. Math. 88 (1998) 207--237.

\bibitem{hoggattbicknell76}
V. E. Hoggatt, Jr., M. Bicknell,
Catalan and related sequences arising from inverses of Pascal's
triangle matrices,
Fibonacci Quart. 14 (1976) 395--405.

\bibitem{npgk78}
R. Nussinov, G. Pieczenik, J. R. Griggs, D. J. Kleitman,
Algorithms for loop matchings,
SIAM J. Appl. Math. 35 (1978) 68--82.

\bibitem{sloane10}
N. J. A. Sloane, The On-Line Encyclopedia of Integer Sequences, 2010,
www.oeis.org.

\bibitem{stanley99}
R. P. Stanley,
Enumerative combinatorics, Vol. 2,
Cambridge University Press, Cambridge, 1999.

\bibitem{waterman78a}
M. S. Waterman,
Secondary structure of single-stranded nucleic acids, in:
Studies in foundations and combinatorics,
Academic Press, New York-London, 1978, pp. 167--212.

\end{thebibliography}

\end{document}